\documentclass[10pt]{amsart}
\usepackage{amssymb}
\usepackage{amscd}
\usepackage{amsmath}
\theoremstyle{plain}

\theoremstyle{definition}
\newtheorem{question}{Question}

\newtheorem{proposition}{Proposition}[section]
\newtheorem{theorem}[proposition]{Theorem}

\newtheorem{corollary}[proposition]{Corollary}
\newtheorem{lemma}[proposition]{Lemma}
\newtheorem{definition}[proposition]{Definition}
\theoremstyle{remark}

\DeclareMathOperator{\tcf}{tcf}

\DeclareMathOperator{\bd}{bd}

\newcommand{\pcfsig}{\text{\rm pcf}_{\sigma\text{\rm-com}}}
\newcommand{\pcftau}{\text{\rm pcf}_{\tau\text{\rm-com}}}
\newcommand{\ppgamma}{\text{\rm pp}_{\Gamma(\theta,\sigma)}}

\DeclareMathOperator{\cf}{cf}

\DeclareMathOperator{\PP}{PP}

\DeclareMathOperator{\pcf}{pcf}

\newcommand{\sk}{\vskip.05in}

\newcommand{\restr}{\upharpoonright}

\DeclareMathOperator{\cov}{cov}

\DeclareMathOperator{\reg}{{\sf Reg}}

\DeclareMathOperator{\pp}{pp}

\numberwithin{equation}{subsection}
\numberwithin{proposition}{subsection}
\begin{document}
\title[Pseudopower Dichotomy]{The Pseudopower Dichotomy}
\begin{abstract}
We investigate pseudopowers of singular cardinals and deduce some consequences for covering numbers at singular cardinals of uncountable cofinality.
\end{abstract}
\keywords{pcf theory, pseudopowers, covering numbers, cardinal arithmetic, singular cardinals}
\subjclass[2010]{03E04, 03E55}
\author{Todd Eisworth}
\email{eisworth@ohio.edu}
\maketitle
\section{Introduction}
\subsection{Overview}
We use pcf theory to establish some equalities between various pseudopowers at a singular cardinal, and use these to derive some ZFC conclusions in cardinal arithmetic.  Our proofs rest on a principle we call {\em the Pseudopower Dichotomy}. This generalization of Fact 1.9 on page 324 of~\cite{cardarith}) tells us that an arbitrary singular cardinal falls into one of two classes: either it is a strong limit in a very weak sense, or it is not.  The majority of the paper is concerned with obtaining results in pcf theory that will allow us to derive meaningful conclusions in both situations.

Our main application of the dichotomy  is a {\sf ZFC} result about the behavior of covering numbers at singular cardinals of uncountable cofinality. The result is quite general, but a typical special case (in which the parameters are chosen for amusement), tells us, for example, that if $\mu$ is a singular cardinal of cofinality $\aleph_6$, then
\begin{equation}
\label{amusement}
\cov(\mu,\mu,\aleph_9,\aleph_2)=\cov(\mu,\mu,\aleph_7,\aleph_2)+\cov(\mu,\mu,\aleph_9, \aleph_6).\footnote{Although the parameters were chosen for amusement, they are not chosen randomly.  We shall see that this particular equation needs $\cf(\mu)$ to be less than $\aleph_\omega$, and that the $\aleph_7$ and $\aleph_6$ appearing on the right-hand side are $\cf(\mu)^+$ and $\cf(\mu)$, respectively. }
\end{equation}
We will also use  recent work of Gitik~\cite{gitik} on the Shelah Weak Hypothesis to provide  complementary consistency results, showing us that in many such equations both terms on the right-hand side are required.

This equation~(\ref{amusement}) is unlikely to mean much to someone unfamiliar with Shelah's {\em Cardinal Arithmetic}~\cite{cardarith}, so our goal for the remainder of this introductory section is lead a reader with a basic knowledge of pcf theory to the point where the preceding paragraphs are understandable.

\subsection{Basic pcf theory} So what do we consider to be ``a basic knowledge of pcf theory''?  Certainly the material covered in Abraham and Magidor's chapter~\cite{AM} in the Handbook of Set Theory~\cite{handbook} is more than enough.  We use their work as a starting point, and any notation that we neglect to define comes from their presentation. The book~\cite{3germans} is an comprehensive account of the same material, and both Kojman's unpublished~\cite{abc} and the classic paper of Burke and Magidor~\cite{burkemagidor} provide ample coverage of the background material as well. The author's paper~\cite{ei} also addresses of some of the topics under consideration here.  Most of the pcf material we need has to do with properties of the ideals $J_{<\lambda}[A]$ and associated pcf generators. More precisely, we will need to use some results of Shelah showing that well-structured sets of generators can be found in certain circumstances.

\subsection{Covering numbers}
Our equation (\ref{amusement}) expresses an equality between covering numbers at a singular cardinal. These covering numbers are cardinal characteristics introduced by Shelah~\cite{cardarith} in his analysis of cardinal arithmetic, and they  arise naturally when one considers structures of the form $\left([\mu]^{<\kappa},\subseteq\right)$. Indeed, these covering numbers are a refinement of the idea of cofinality in such structures.
\begin{definition}
Suppose $\mu$, $\kappa$, $\theta$, and $\sigma$ are cardinals satisfying
\begin{equation}
\label{ordering}
2\leq\sigma<\theta\leq\kappa\leq\mu.
\end{equation}
A {\em $\sigma$-cover of $[\mu]^{<\theta}$ in $[\mu]^{<\kappa}$} is a family $\mathcal{P}\subseteq[\mu]^{<\kappa}$ with the property that every member of $[\mu]^{<\theta}$ can be covered by a union of fewer than $\sigma$ elements drawn from~$\mathcal{P}$, that is
\begin{equation}
(\forall X\in [\mu]^{<\theta})(\exists Y\subseteq\mathcal{P})\left[|Y|<\sigma\text{ and }X\subseteq\bigcup Y\right].
\end{equation}
The covering number $\cov(\mu,\kappa,\theta,\sigma)$ is defined to be the least cardinality of a $\sigma$-cover of $[\mu]^{<\theta}$ in $[\mu]^{<\kappa}$.
\end{definition}

We assume (\ref{ordering}) in order to avoid  uninteresting cases.  Since it is clear that
\begin{equation}
\cov(\mu,\kappa,\theta, 2)=\cov(\mu,\kappa,\theta,\aleph_0),
\end{equation}
we may as well assume that all four parameters are infinite cardinals. Note as well that $\cov(\mu,\kappa^+,\kappa^+,2)$ is just the cofinality of the structure $\left([\mu]^\kappa,\subseteq\right)$, so covering numbers refine this familiar notion.  Section 5 of Chapter II in~\cite{cardarith} contains a comprehensive list of basic properties satisfied by covering numbers.

Our work in this paper focuses on covering numbers in which the first two arguments are the same, so we take $\mu$ equal to $\kappa$. Such covering numbers describe the way in which $[\mu]^{<\theta}$ sits inside of $[\mu]^{<\mu}$.  This is interesting only in the case where $\mu$ is singular:  if $\mu$ is regular, then the initial segments of $\mu$ will automatically cover $[\mu]^{<\mu}$ so the covering number is just $\mu$. Furthermore, we assume $\sigma\leq\cf(\mu)<\theta$ to avoid similar trivialities.  Shelah has shown that general covering numbers can be computed from those where the first two components are the same, so our restriction is done without loss of generality.  It is the behavior at singular cardinals that is important.

\subsection{Pseudopowers}

One of Shelah's many surprising discoveries in cardinal arithmetic is that covering numbers can often be computed using pcf theory, and we exploit this link to obtain results like~(\ref{amusement}). The connection occurs through  {\em pseudopowers} of singular cardinals. Pseudopowers are best thought of as pcf-theoretic versions of cardinal exponentiation. Computing a pseudopower at a singular cardinal~$\mu$ involves examining the cardinals that can be represented in specific ways as the cofinality of reduced products of sets of regular cardinals cofinal in~$\mu$.  The following definition and subsequent discussion fix our vocabulary.

\begin{definition}
Suppose $\mu<\lambda$ are cardinals with $\mu$ singular and $\lambda$ regular. A {\em representation} of $\lambda$ at $\mu$ is a pair $(A, J)$ where
\begin{itemize}
\item $A$ is a progressive (that is, satisfying  $|A|<\min(A)$) cofinal subset of $\mu\cap\reg$
\sk
\item $J$ is an ideal on $A$ extending the bounded ideal $J^{\bd}[A]$
\sk
\item $\lambda$ is the {\em true cofinality} of the reduced product $\prod A/ J$, that is, there is a sequence $\langle f_\alpha:\alpha<\lambda\rangle$ in $\prod A$ such that
\begin{equation}
\alpha<\beta<\lambda\Longrightarrow f_\alpha<_J f_\beta,
\end{equation}
 and
 \begin{equation}
 (\forall g\in\prod A)(\exists\alpha<\lambda)\left[g<_J f_\alpha\right].
 \end{equation}
 We abbreviate this by writing $\lambda = \tcf(\prod A/ J)$.
\sk
\item $\lambda=\max\pcf(A)$.
\end{itemize}
A cardinal $\lambda$ is {\em representable at }$\mu$ if such a representation exists. More generally, given cardinals $\sigma<\theta\leq\mu$ with $\sigma$ regular, we say $\lambda$ is {\em $\Gamma(\theta,\sigma)$-representable at~$\mu$} if there is a representation $(A, J)$ of $\lambda$ at $\mu$ with $|A|<\theta$ and $\sigma$-complete ideal~$J$.\footnote{To highlight a point of potential confusion, note that $\sigma$ may be larger than $\aleph_1$, so we are not using ``$\sigma$-complete'' as a synonym for ``countably complete''.}
We say that $|A|$ is the {\em size} of the representation, and the {\em completeness} of the representation is the completeness of the ideal~$J$, that is, the least cardinal $\tau$ such that $\tau$ is not closed under unions of size $\tau$.
\end{definition}
When speaking about $\Gamma(\theta,\sigma)$-representability at a cardinal~$\mu$, we will always assume $\sigma\leq\cf(\mu)<\theta$ (as otherwise things degenerate), and that $\sigma$ and $\theta$ are regular.  This is reminiscent of the assumption~(\ref{ordering}), and it will guarantee that the associated pseudopowers obey some useful rules.

Moving on, we come  to the actual definition of the pseudopower operation:

\begin{definition}
Suppose $\mu$ is singular, and $\sigma\leq\cf(\mu)<\theta\leq\mu$ with $\sigma$ and $\theta$ regular.  We define
\begin{equation}
\PP_{\Gamma(\theta,\sigma)}(\mu):=\{\lambda:\text{ $\lambda$ is $\Gamma(\theta,\sigma)$-representable at $\mu$}\},
\end{equation}
and the $\Gamma(\theta,\sigma)$-pseudopower of~$\mu$, $\pp_{\Gamma(\theta,\sigma)}(\mu)$, is defined by
\begin{equation}
\pp_{\Gamma(\theta,\sigma)}(\mu):=\sup\PP_{\Gamma(\theta,\sigma)}(\mu).
\end{equation}
\end{definition}

There are a few notational variants used in the literature, all due to Shelah.  For example,
\begin{equation}
\pp_\theta(\mu):=\pp_{\Gamma(\theta^+,\aleph_0)}(\mu)
\end{equation}
(so in this case we are computing the supremum of all cardinals that can be represented at~$\mu$ using a set of size at most~$\theta$), and
\begin{equation}
\pp_{\Gamma(\sigma)}(\mu)=\pp_{\Gamma((\cf\mu)^+,\sigma)}(\mu),
\end{equation}
(which computes the supremum of the set of cardinals that can be represented at~$\mu$ using a $\sigma$-complete ideal on a set of size~$\cf(\mu)$).
Finally, the {\em pseudopower $\pp(\mu)$ of $\mu$} is defined by
\begin{equation}
 \pp(\mu):=\pp_{\cf(\mu)}(\mu).
 \end{equation}

A little discussion may help the reader digest the preceding definitions.  First, we note the obvious monotonicity property:  if $\sigma\leq\sigma'<\theta'\leq\theta$ are regular cardinals and $\mu$ is singular
with cofinality in the interval $[\sigma',\theta')$, then
\begin{equation}
\label{monotonicity}
\PP_{\Gamma(\theta',\sigma')}(\mu)\subseteq\PP_{\Gamma(\theta,\sigma)}(\mu).
\end{equation}
Given a singular cardinal~$\mu$, a well-known theorem of Shelah on the existence of scales at successors of singular cardinals provides us with a cofinal subset $A$ of $\mu\cap\reg$ such that
\begin{itemize}
\item $|A|=\cf(\mu)$, and
\sk
\item $\tcf(\prod A/ J^{\bd}[A])=\mu^+$, where $J^{\bd}[A]$ is the bounded ideal on $A$.
\sk
\end{itemize}
The bounded ideal is trivially $\cf(\mu)$-complete, so this means that $\mu^+$ is~$\Gamma(\cf(\mu))$-representable, and then an appeal to (\ref{monotonicity}) shows us that in fact $\mu^+$ is $\Gamma(\theta,\sigma)$-representable at~$\mu$ for all relevant $\sigma$ and $\theta$.  Thus,  $\pp_{\Gamma(\theta,\sigma)}(\mu)$ is always at least~$\mu^+$.

Lying a little deeper is a result of Shelah that $\PP_{\Gamma(\theta,\sigma)}(\mu)$ consists of an {\em interval} of regular cardinals, that is
\begin{equation}
\lambda\in\PP_{\Gamma(\theta,\sigma)}(\mu)\Longrightarrow [\mu^+,\lambda]\cap\reg\subseteq\PP_{\Gamma(\theta,\sigma)}(\mu).
\end{equation}
This is known as the {\sf No Holes Conclusion} (see 2.3 in Chapter~{II} of~\cite{cardarith}).

The interval of regular cardinals $\PP_{\Gamma(\theta,\sigma)}(\mu)$ enjoys some nice closure properties: if $A$ is a subset of $\PP_{\Gamma(\theta,\sigma)}(\mu)$ of cardinality less than~$\theta$, then
\begin{equation}
\label{pcfsigclose}
\pcfsig(A)\subseteq\PP_{\Gamma(\theta,\sigma)}(\mu),
\end{equation}
that is, the interval of regular cardinals $\PP_{\Gamma(\theta,\sigma)}(\mu)$ is closed under computing $\sigma$-complete pcf on sets of cardinality less than~$\theta$.\footnote{A cardinal $\lambda$ is in $\pcfsig(A)$ if there is a $\sigma$-complete ideal $J$ on $A$ with the true cofinality of $\prod A/ J$ equal to~$\lambda$. See, e.g., the chapter~\cite{AM}.}  This is a critical property for us, and it is usually expressed in terms of an {\em inverse monotonicity} property of $\Gamma(\theta,\sigma)$-pseudopowers:
\begin{proposition}[{\sf Inverse Monotonicity}]
\label{inversemonotonicity}
Suppose $\sigma\leq\cf(\mu)<\theta$ with $\sigma$ and $\theta$ regular.  If $\eta<\mu$ satisfies
\begin{itemize}
\item $\sigma\leq\cf(\eta)<\theta$, and
\sk
\item $\mu\leq\pp_{\Gamma(\theta,\sigma)}(\eta)$,
\end{itemize}
then
\begin{equation}
\PP_{\Gamma(\theta,\sigma)}(\mu)\subseteq\PP_{\Gamma(\theta,\sigma)}(\eta),
\end{equation}
and therefore
\begin{equation}
\pp_{\Gamma(\theta,\sigma)}(\mu)\leq\pp_{\Gamma(\theta,\sigma)}(\eta).
\end{equation}
\end{proposition}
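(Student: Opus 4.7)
The plan is to prove the inclusion $\PP_{\Gamma(\theta,\sigma)}(\mu)\subseteq\PP_{\Gamma(\theta,\sigma)}(\eta)$ directly, since the pseudopower inequality follows by taking suprema. Given $\lambda\in\PP_{\Gamma(\theta,\sigma)}(\mu)$ with witnessing representation $(A,J)$, the strategy is to recognize $A$ itself (after discarding its bounded initial segment) as a subset of $\PP_{\Gamma(\theta,\sigma)}(\eta)$, and then to invoke the pcf-closure property~(\ref{pcfsigclose}) at $\eta$ to conclude $\lambda\in\pcfsig(A)\subseteq\PP_{\Gamma(\theta,\sigma)}(\eta)$.

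Carrying this out: first I truncate $A$ to $A':=A\setminus\eta^+$. Since $A$ is cofinal in $\mu$ and $\eta<\mu$, the discarded piece $A\cap\eta^+$ is bounded in $A$ and hence lies in $J^{\bd}[A]\subseteq J$; this implies that $J\restr A'$ is a $\sigma$-complete ideal extending $J^{\bd}[A']$ with $\tcf(\prod A'/J\restr A')=\lambda$, and it is easy to check that $A'$ remains progressive and cofinal in $\mu$ and satisfies $\lambda=\max\pcf(A')$. So I may assume $\eta<\min(A)$. Second, for each $\chi\in A$ I have $\eta^+\leq\chi<\mu\leq\pp_{\Gamma(\theta,\sigma)}(\eta)=\sup\PP_{\Gamma(\theta,\sigma)}(\eta)$, so some $\lambda'\in\PP_{\Gamma(\theta,\sigma)}(\eta)$ satisfies $\chi\leq\lambda'$; the {\sf No Holes Conclusion} applied at~$\eta$ (applicable because $\sigma\leq\cf(\eta)<\theta$) then places $\chi\in\PP_{\Gamma(\theta,\sigma)}(\eta)$. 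With $A\subseteq\PP_{\Gamma(\theta,\sigma)}(\eta)$ and $|A|<\theta$, the pcf-closure~(\ref{pcfsigclose}) at $\eta$ yields $\pcfsig(A)\subseteq\PP_{\Gamma(\theta,\sigma)}(\eta)$, and since $\lambda=\tcf(\prod A/J)$ with $J$ $\sigma$-complete we have $\lambda\in\pcfsig(A)$, completing the argument.

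Essentially all of the real work is done by the two previously cited facts — the pcf-closure property~(\ref{pcfsigclose}) and the {\sf No Holes Conclusion} — and the rest is bookkeeping. The only step that requires any care is the truncation: the $\max\pcf$ clause in the definition of a representation is not automatically preserved under arbitrary restrictions, but removing a $J$-null bounded tail preserves it because $\lambda$ still lies in $\pcf(A')$ while $\pcf(A')\subseteq\pcf(A)$ forces $\max\pcf(A')=\lambda$.
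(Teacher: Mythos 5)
The paper offers no proof of this proposition at all: it is quoted as $\otimes_1$ from Section~II.2 of \emph{Cardinal Arithmetic}, and the surrounding discussion then \emph{derives} the closure property~(\ref{pcfsigclose}) from it. Your argument runs that implication in the opposite direction, so as written it is circular relative to the paper's logical structure: the only justification the paper gives for~(\ref{pcfsigclose}) is the very proposition you are invoking it to prove. The parts of your write-up that are genuinely your own --- truncating $A$ above $\eta$, checking that the $\max\pcf$ clause survives the removal of a $J$-null initial segment, and using the {\sf No Holes Conclusion} at $\eta$ to place the truncated $A$ inside $\PP_{\Gamma(\theta,\sigma)}(\eta)$ --- are all correct, and what you have really shown is that, granting {\sf No Holes}, the closure property~(\ref{pcfsigclose}) and {\sf Inverse Monotonicity} are equivalent. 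That is a fair observation, but it is not an independent proof of either.

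The mathematical content you are missing is exactly what is buried inside~(\ref{pcfsigclose}): starting from the representation $(A,J)$ of $\lambda$ at $\mu$, one replaces each $\chi\in A$ (all of which are representable at $\eta$, by your {\sf No Holes} step) with a $\Gamma(\theta,\sigma)$-representation $(B_\chi,J_\chi)$ of $\chi$ at $\eta$, and then pastes these together into a single representation of $\lambda$ at $\eta$ on the set $B=\bigcup_{\chi\in A}B_\chi$, with the ideal obtained by summing the $J_\chi$ over $J$. One must check that $|B|<\theta$ (regularity of $\theta$), that $B$ can be arranged to be progressive and cofinal in $\eta\cap\reg$, that the sum ideal is $\sigma$-complete and extends $J^{\bd}[B]$, and that the iterated true-cofinality computation indeed returns $\lambda$. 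This gluing argument is the proof of $\otimes_1$, and nothing in your proposal supplies it. Either carry out that construction, or replace the appeal to~(\ref{pcfsigclose}) with a citation that establishes it independently of {\sf Inverse Monotonicity}; otherwise the proof has a gap.
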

The preceding result can be found as~$\otimes_1$ in Section~II.2 of~\cite{cardarith}. To see why this implies the closure property expressed in (\ref{pcfsigclose}), suppose  $A\subseteq\PP_{\Gamma(\theta,\sigma)}(\mu)$ satisfies $|A|<\theta$ and $J$ is $\sigma$-complete ideal on $A$ for which $\prod A/ J$ has true cofinality equal to some $\lambda$.  Let $\eta$ be the least cardinal such that $A\cap\eta\notin J$.  Then $\eta$ is singular with $\sigma\leq\cf(\eta)<\theta$, and $\lambda$ is $\Gamma(\theta,\sigma)$-representable at~$\eta$ by way of the pair $(A\cap\mu', J\restr A\cap\mu')$.   Inverse Monotonicity implies that $\lambda$ also $\Gamma(\theta,\sigma)$-representable at~$\mu$, and hence it is a member of~$\PP_{\Gamma(\theta,\sigma)}(\mu)$ by definition.

The final ingredient of the calculus of pseudopowers that we need is denoted {\em Continuity}, and can be found as $\otimes_2$ in Section~II.2 of~\cite{cardarith}:
\begin{proposition}[{\sf Continuity}]
\label{continuity}
Assume $\sigma\leq\cf(\mu)<\theta$ with $\sigma$ and $\theta$ regular, and let $\lambda$ be a regular cardinal greater than~$\mu$.  If $\lambda$ is $\Gamma(\theta,\sigma)$-representable at~$\eta$ for an unbounded set of singular cardinals $\eta<\mu$ (satisfying $\sigma\leq\cf(\eta)<\theta<\eta$), then $\lambda$ is also $\Gamma(\theta,\sigma)$-representable at~$\mu$.  In other words, if
\begin{equation}
\mu=\sup\{\eta<\mu:\sigma\leq\eta<\theta<\eta\text{ and }\lambda\in\PP_{\Gamma(\theta,\sigma)}(\eta)\},
\end{equation}
then
\begin{equation}
\lambda\in\PP_{\Gamma(\theta,\sigma)}(\mu).
\end{equation}
\end{proposition}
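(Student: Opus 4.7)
The plan is to paste together the representations available at the witnessing cardinals below $\mu$.  Pick a strictly increasing sequence $\langle\eta_i:i<\cf(\mu)\rangle$ cofinal in $\mu$, each term satisfying $\sigma\leq\cf(\eta_i)<\theta<\eta_i$ and $\lambda\in\PP_{\Gamma(\theta,\sigma)}(\eta_i)$, and for each $i$ fix a representation $(B_i,J_i)$ of $\lambda$ at $\eta_i$ witnessed by a scale $\langle f^i_\alpha:\alpha<\lambda\rangle$ in $\prod B_i$.  Setting $\gamma_i:=\max(\theta,\sup_{j<i}\eta_j)<\eta_i$ and replacing each $B_i$ by $B_i\setminus\gamma_i$ (a bounded subtraction allowed since $J_i\supseteq J^{\bd}[B_i]$), I arrange the $B_i$'s to be pairwise disjoint with $\min(B_0)>\theta$.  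Then $A:=\bigcup_{i<\cf(\mu)}B_i$ is a progressive cofinal subset of $\mu\cap\reg$ of size $<\theta$.

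Next I introduce an ideal on $A$ by declaring $X\in J$ iff $\{i<\cf(\mu):X\cap B_i\notin J_i\}$ is bounded in $\cf(\mu)$.  The regularity of $\cf(\mu)\geq\sigma$ together with the $\sigma$-completeness of each $J_i$ yields directly that $J$ is a $\sigma$-complete ideal extending $J^{\bd}[A]$.  To obtain a scale, define $g_\alpha\in\prod A$ by $g_\alpha(x)=f^i_\alpha(x)$ for $x\in B_i$.  The sequence $\langle g_\alpha:\alpha<\lambda\rangle$ is $<_J$-increasing because for $\alpha<\beta$ the discrepancy set meets each $B_i$ in a $J_i$-small piece.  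For cofinality, given $h\in\prod A$, pick $\alpha_i<\lambda$ with $h\restr B_i<_{J_i}f^i_{\alpha_i}$; then $\alpha:=\sup_i\alpha_i<\lambda$ (using $\cf(\mu)<\lambda$ and regularity of $\lambda$), and $h<_J g_\alpha$.  Thus $\lambda=\tcf(\prod A/J)$.

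It remains to arrange $\lambda=\max\pcf(A)$, which I accomplish by passing to the generator $A^*:=B_\lambda[A]$.  The defining property of the generator, applied to each $B_i$ (for which $\lambda\in\pcf(B_i)$), yields $B_i\setminus A^*\in J^{<\lambda}[A]$, and I claim this forces $B_i\setminus A^*\in J_i$.  Otherwise, augmenting the dual of $J_i$ with $B_i\setminus A^*$ and extending to an ultrafilter $D$ on $B_i$ would give $\cf(\prod B_i/D)=\lambda$ (since $\tcf(\prod B_i/J_i)=\lambda$), placing $\lambda\in\pcf(B_i\setminus A^*)$ in contradiction with $\max\pcf(B_i\setminus A^*)<\lambda$.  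Therefore $B_i\setminus A^*\in J_i$ for every $i$, so $A\setminus A^*\in J$ directly; each $A^*\cap B_i$ is then $J_i$-positive, hence unbounded in $\eta_i$, so $A^*$ is cofinal in $\mu$.  The pair $(A^*,J\cap\mathcal P(A^*))$ inherits $\sigma$-completeness and true cofinality $\lambda$ from $(A,J)$, and $\max\pcf(A^*)=\lambda$ by the choice of generator, so it is the desired $\Gamma(\theta,\sigma)$-representation of $\lambda$ at $\mu$.  The main obstacle, as expected, is this last step: one needs to translate $J^{<\lambda}[A]$-smallness into $J_i$-smallness on each fibre, and the explicit level-by-level definition of $J$ is what makes the translation possible.
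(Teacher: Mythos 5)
Your argument is correct and is exactly the ``pasting'' construction that the paper (following Shelah's $\otimes_2$ in Section~II.2 of \cite{cardarith}) only sketches: the diagonal ideal $J$ measuring on which fibres a set is $J_i$-positive, the concatenated scale, and the passage to the generator $B_\lambda[A]$ to secure $\max\pcf=\lambda$ are all the expected steps, and each is carried out correctly. The only cosmetic point is that the $\eta_i$ should be chosen recursively with $\eta_i>\sup_{j<i}\eta_j$ (possible since the witnessing set is unbounded) so that the asserted inequality $\gamma_i<\eta_i$ holds at limit indices and the truncated $B_i\setminus\gamma_i$ remains cofinal in $\eta_i$.
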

The proof is not difficult: one can paste together suitable representations of $\lambda$ at cardinals $\eta<\mu$ to obtain a representation at~$\mu$ itself.

\subsection{The $\cov$ vs. $\pp$ Theorem}

With the above material in hand, we can state the connection between covering numbers and pseudopowers. This theorem is due to Shelah (Theorem~5.4 of Chapter~{II} of~\cite{cardarith}); our paper~\cite{ei} gives another proof of the result, in addition to providing much more background about pseudopowers and their computation.

\begin{theorem}[{\sf The cov vs. pp Theorem}]
\label{covppthm}
 Suppose $\sigma<\theta$ are infinite regular cardinals, and $\mu$ is singular with $\sigma\leq\cf(\mu)<\theta$.
Then
\begin{equation}
\pp_{\Gamma(\theta,\sigma)}(\mu)\leq\cov(\mu,\mu,\theta,\sigma).
\end{equation}
If $\sigma>\aleph_0$ (so $\mu$ has uncountable cofinality) then
\begin{equation}
\label{1.20}
\ppgamma(\mu)=\cov(\mu,\mu,\theta,\sigma).
\end{equation}
\end{theorem}

The proof of (\ref{1.20}) is the hard part of the result, and the question of whether the uncountability of $\sigma$ is necessary to prove the equality is a major open question in pcf theory known as the ``cov vs. pp problem''.  Shelah has shown that in many cases this condition can be dropped, and that pcf theory in the neighborhood of a counterexample~$\mu$ must be very badly behaved.

\subsection{Overview revisited}

Given the above discussions, we hope the reader is now in a better position to understand the summary given in Subsection 1.1. For example, to obtain our sample equation~(\ref{amusement}), we prove that if a cardinal $\lambda$ is $\Gamma(\aleph_9,\aleph_2)$-representable at $\mu$ of cofinality $\aleph_6$, then either $\lambda$ is $\Gamma(\aleph_7,\aleph_2)$-representable at $\mu$, or it is $\Gamma(\aleph_9,\aleph_6)$-representable at $\mu$.   In other words, if $\lambda$ can be represented at a singular cardinal $\mu$ of cofinality $\aleph_6$ using an $\aleph_2$-complete ideal on a set of cardinality at most $\aleph_8$, then it can either be represented using an $\aleph_2$-complete ideal on a set of cardinality $\aleph_6$ (the minimum possible size of a representation at~$\mu$), or it can be represented on a set of cardinality at most $\aleph_8$ using an $\aleph_6$-complete ideal (the maximum possible completeness of a representation at~$\mu$). We then apply the cov vs. pp Theorem to draw conclusions about the corresponding covering numbers (see Corollary~\ref{cor7.1} and the subsequent discussion).\footnote{We also note here that the two options do not correspond cleanly to the possibilities of the Pseudopower Dichotomy: in the ``strong limit'' option of the dichotomy, we will actually get a representation of $\lambda$ using a $\cf(\mu)$-complete ideal on a set of size $\cf(\mu)$, and it is the other option of the dichotomy that is responsible for the complexity of~(\ref{amusement}).}

\subsection{Structure of the paper}
The remainder of the paper is structured as follows:
\begin{itemize}
\item In Section~2 we chain together some theorems of Shelah (scattered throughout several papers) concerning the existence of well-organized sets of generators in order to formulate a precise and user-friendly result (Corollary~\ref{nicecor}) that we then use to prove a generalization of the main result of~\cite{371}, tailored to $\sigma$-complete pcf.
\sk
\item Section~3 builds on this work and analyzes pseudopowers at singular cardinals that are {\em eventually $\Gamma(\theta,\sigma)$-closed}.  We are able to show that cardinals $\Gamma(\theta,\sigma)$-representable at such a~$\mu$ can be represented in a well-organized way.

\item Section~4 isolates the {\sf Pseudopower Dichotomy}, and then analyzes  the structure of $\PP_{\Gamma(\theta,\sigma)}(\mu)$ when the singular cardinal~$\mu$ is {\em NOT} eventually $\Gamma(\theta,\sigma)$-closed.
\sk
\item In Section~5, we use the previous sections to obtain results in {\sf ZFC} about equalities between various types of pseudopowers at a singular cardinal~$\mu$. In particular, we show
    \begin{equation}
    \pp_{\theta}(\mu) = \pp(\mu)+\pp_{\Gamma(\theta^+,\cf(\mu))}(\mu),
    \end{equation}
    and, for $\sigma<\cf(\mu)$,
    \begin{equation}
    \pp_{\Gamma(\theta,\sigma)}(\mu) = \pp_{\Gamma(\sigma)}(\mu) + \pp_{\Gamma(\theta,\sigma^+)}(\mu).
    \end{equation}
\sk
\item Section~6 uses recent work of Gitik~\cite{gitik} to provide complementary independence results related to the formulas derived in Section~5.
\sk
\item In Section~7 we map out consequences of these results for covering numbers, arriving at the formula (\ref{amusement}) and its relatives, and also discussing its consequences. We conclude with questions raised by this work.
\end{itemize}

\section{Reducing the size of a representations}

\subsection{On generators}

In this section, we prove a generalization of Theorem~1.1 of~\cite{371} that extends Shelah's result to $\sigma$-complete pcf. We prove the theorem by manipulating a suitably nice collection of pcf generators rather than by working directly with characteristic functions of models as in~\cite{371}.

The basic definitions follow, and we refer the reader to~\cite{AM} and \cite{burkemagidor} for more detailed discussion of these matters.
\begin{definition}
Let $A$ be a progressive set of regular cardinals.
\begin{enumerate}
\item If $\lambda\in\pcf(A)$, then a subset $B$ of $A$ is a {\em generator for $\lambda$ in $A$} if
\begin{equation}
J_{\leq\lambda}[A]= J_{<\lambda}[A]+ B.
\end{equation}
\item A {\em generating sequence} for $\pcf(A)$ is a sequence $\langle B_\lambda:\lambda\in\pcf(A)\rangle$ where $B_\lambda$ is a generator $\lambda$ in $A$.
\sk
\item More generally, if $C$ is a progressive subset of $\pcf(A)$ and $\Lambda$ is a subset of $\pcf(C)$,  we say that $\langle B_\lambda:\lambda\in\Lambda\rangle$ is a generating sequence for $\Lambda$ in $C$ if $B_\lambda$ is a generator for $\lambda$ in $C$ for all $\lambda\in \Lambda$, that is
    \begin{equation}
    J_{\leq\lambda}[C] = J_{<\lambda}[C] + B_\lambda
    \end{equation}
    for all $\lambda\in\Lambda$.
\end{enumerate}
\end{definition}

In (3), our assumptions imply $\pcf(C)\subseteq\pcf(A)$ so $\Lambda$ is a subset of $\pcf(A)$ as well. We will usually assume that $C$ includes $A$, and in this situation we know that if $B_\lambda$ is a generator for $\lambda$ in $C$, then $B_\lambda\cap A$ will be a generator for $\lambda$ in $A$.  Where important for clarity, we may write $B_\lambda[C]$ to emphasize that the corresponding set is a generator for $\lambda$ {\em in $C$}.

\begin{definition}
Let $A$ be a progressive set of regular cardinals, and suppose $\mathfrak{\bar{b}}$ is a generating sequence $\langle B_\lambda[C]:\lambda\in\Lambda\rangle$ for $\Lambda$ in $C$, where $\Lambda\subseteq\pcf(A)$ and $C$ is a progressive subset of $\pcf(A)$ containing $A$.
The sequence $\bar{\mathfrak{b}}$ is {\em transitive} if
\begin{equation}
\theta\in B_\lambda[C]\cap\Lambda\Longrightarrow B_\theta[C]\subseteq B_\lambda[C].
\end{equation}
\end{definition}

It is a fundamental result of pcf theory that for a progressive $A$, any $\lambda$ in $\pcf(A)$ has a corresponding generator in $A$, and thus we can always find a generating sequence $\langle B_\lambda[A]:\lambda\in\pcf(A)\rangle$ for $\pcf(A)$ in $A$.  Obtaining transitive generating sequences is a more complicated issue.  Shelah shows in Claim~6.7 of~\cite{430} that there is a transitive generating sequence for $A$ (not $\pcf(A)$!) in $A$ whenever $A$ is progressive. Thus, if $\pcf(A)$ happens to be progressive, then a transitive generating sequence for $\pcf(A)$ in $\pcf(A)$ will exist.  Abraham and Magidor prove something a little more general in Section~6 of~\cite{AM}.  A corollary of their presentation is that if $\kappa$ is a regular cardinal with $|A|<\kappa<\min(A)$ and $\Lambda$ is a subset of $\pcf(A)$ of cardinality at most $\kappa$, then we can find a transitive generating sequence for $\Lambda$ in $A$.

Our proof requires more than this, and we need some results of Shelah appearing in Claims~6.7A and B of~\cite{430}.  These are quite technical, so in the interest of readability we summarize our requirements in a ``black box'' result:

\begin{theorem}[{\sf Black Box}]
\label{blackbox}
Let $A$ be a progressive set of regular cardinals, and suppose $\kappa$ is a regular cardinal satisfying $|A|<\kappa<\min(A)$. Then for any sufficiently large regular $\chi$ and $x\in H(\chi)$, there is an elementary submodel $N$ of $H(\chi)$ of cardinality~$\kappa$ containing $A$ and $x$ and a sequence $\bar{\mathfrak{b}}=\langle B_\lambda:\lambda\in N\cap\pcf(A)\rangle$ such that
\begin{enumerate}
\item $\bar{\mathfrak{b}}$ is a transitive generating sequence for $N\cap\pcf(A)$ in $N\cap\pcf(A)$, and
\sk
\item given a sequence $\bar{A}=\langle A_\xi:\xi<\eta\rangle$ of sets and a regular cardinal $\sigma$ such that
\begin{enumerate}
\item $\bar{A}\in N$
\sk
\item $\eta$ and $\sigma$ are less than $\kappa$, and
\sk
\item $A_\xi$ is a subset of $N\cap\pcf(A)$ for each $\xi<\eta$,
\end{enumerate}
there is a family $\bar{C} = \langle C_\xi:\xi<\eta\rangle$ such that
\begin{itemize}
\item $\bar{C}\in N$,
\sk
\item $C_\xi$ is a subset of $\pcfsig(A_\xi)$ of cardinality less than $\sigma$, and
\sk
\item $A_\xi\subseteq\bigcup_{\lambda\in C_\xi}B_\lambda$
\end{itemize}
\end{enumerate}
\end{theorem}

Part (1) of the conclusion gives more than what Abraham and Magidor obtain in~\cite{AM}, as they produce a transitive generating sequence for $N\cap\pcf(A)$ in $A$, and not in the (possibly larger) set $N\cap \pcf(A)$. Said another way, the argument in~\cite{AM} provides sets $B_\lambda$ for $\lambda\in N\cap\pcf(A)$ that are subsets of $A$ and such that $B_\lambda$ generates the ideal $J_{\leq\lambda}[A]$ over $J_{<\lambda}[A]$.  The stronger version we need provides generators that function in the progressive set $N\cap\pcf(A)$, rather than just in $A$, with the corresponding $B_\lambda$ generating $J_{\leq\lambda}[N\cap\pcf(A)]$ over $J_{<\lambda}[N\cap\pcf(A)]$.  This extra power will be important for us, and it follows from parts (1) and (2) of Claim 6.7B of~\cite{430}.

Part (2) of the conclusion says that even though the generators we produce may not be in $N$, they do interact well with $N$ in the sense that pcf compactness arguments work as long as we are trying to cover sets in $N$.   This follows from part (3) of Claim 6.7B of~\cite{430}, and it is arranged by showing that the generators produced have suitable ``internal reflections'' inside of $N$.

\subsection{Reducing the size of a representation}
We turn now to the main result of this section: a relative of Theorem~1.1 from Chapter~VIII of~\cite{cardarith}.

\begin{theorem}
\label{practice}
Suppose $A=\bigcup_{\xi<\eta}A_\xi$ is a progressive set of regular cardinals, and suppose $\sigma$ is a regular cardinal. Suppose
\begin{equation}
\lambda=\max\pcf(A),
\end{equation}
and
\begin{equation}
\label{eqn4}
\lambda\in\pcfsig(A)\setminus\bigcup_{\xi<\eta}\pcfsig(A_\xi).
\end{equation}
Then can find a subset $C$ of $\bigcup_{\xi<\eta}\pcfsig(A_\xi)$ such that
 \begin{equation}
 |C|\leq\eta,
 \end{equation}
 and
 \begin{equation}
 \lambda\in\pcfsig(C).
 \end{equation}
\end{theorem}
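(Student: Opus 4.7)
The plan is to construct $C$ by choosing a single ``$\sigma$-complete pcf maximum'' from each piece $A_\xi$ and to realise $\lambda$ as a true cofinality modulo a $\sigma$-complete pushforward ideal on~$C$. We first treat the case in which $\pcf(A)$ is itself progressive; removing this extra hypothesis (by replacing the Abraham--Magidor version of generators with Shelah's Claim~6.7 of~\cite{430}) is postponed until the end of the argument. Assuming $\pcf(A)$ is progressive, fix a closed transitive generating sequence $\langle B_\mu:\mu\in\pcf(A)\rangle$ for $\pcf(A)$ in itself. For each $\xi<\eta$ put $\lambda_\xi:=\max\pcfsig(A_\xi)$, which is well-defined because $\pcfsig(A_\xi)$ is a non-empty bounded set of regular cardinals with the necessary closure. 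Set $C:=\{\lambda_\xi:\xi<\eta\}$. Then automatically $|C|\le\eta$ and $C\subseteq\bigcup_{\xi<\eta}\pcfsig(A_\xi)$, and the assumption $\lambda\notin\pcfsig(A_\xi)$ combined with $\lambda_\xi\le\max\pcf(A_\xi)\le\lambda$ forces $\lambda_\xi<\lambda$ for every~$\xi$.

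For the main claim $\lambda\in\pcfsig(C)$, fix a $\sigma$-complete ideal $J$ on~$A$ carrying a scale $\langle f_\alpha:\alpha<\lambda\rangle$ in $\prod A/J$, and for each~$\xi$ a $\sigma$-complete ideal~$J_\xi$ on~$A_\xi$ together with a scale $\langle h^\xi_\beta:\beta<\lambda_\xi\rangle$ witnessing $\lambda_\xi=\tcf(\prod A_\xi/J_\xi)$. Let $F:A\to C$ send~$a$ to~$\lambda_{\xi(a)}$, where $\xi(a)$ is the least~$\xi$ with $a\in A_{\xi(a)}$, and let~$J^*$ be the pushforward ideal, so $X\in J^*$ iff $F^{-1}(X)\in J$. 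Then $J^*$ is a proper $\sigma$-complete ideal on~$C$. Build a candidate scale $\langle g_\alpha:\alpha<\lambda\rangle$ in $\prod C$ by
\[
g_\alpha(\lambda_\xi):=\min\bigl\{\beta<\lambda_\xi:f_\alpha\upharpoonright A_\xi<_{J_\xi}h^\xi_\beta\bigr\},
\]
and the task reduces to showing this is a scale in $\prod C/J^*$.

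The main obstacle is verifying that $\langle g_\alpha:\alpha<\lambda\rangle$ is both $<_{J^*}$-increasing and $<_{J^*}$-cofinal. Increasing-ness translates $f_\alpha<_J f_{\alpha'}$ into the corresponding $J_\xi$-inequalities on restrictions, using closedness of the generating sequence at each~$\lambda_\xi$ to ensure $J_\xi$ and~$J$ are coherent on~$A_\xi$. Cofinality proceeds in the opposite direction: given $g\in\prod C$, the companion function $f(a):=h^{\xi(a)}_{g(\lambda_{\xi(a)})}(a)$ lies in $\prod A$ and so is dominated modulo~$J$ by some~$f_\alpha$, and this must be shown to transfer back to $g<_{J^*}g_\alpha$. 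The crucial point in both directions is the maximality of~$\lambda_\xi$ in $\pcfsig(A_\xi)$: it ensures that the scale $\langle h^\xi_\beta\rangle$ detects every $\sigma$-complete cofinality on~$A_\xi$ that~$J$ could encode, making the translation between $\prod A/J$ and $\prod C/J^*$ faithful in both directions. Once the scale property is in place, $\tcf(\prod C/J^*)=\lambda$ and therefore $\lambda\in\pcfsig(C)$, as desired.
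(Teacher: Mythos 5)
Your overall strategy---pick one distinguished cardinal from each piece and transfer a scale---does not match the mechanism this theorem actually requires, and the argument has a genuine gap at its center. The set $C=\{\lambda_\xi:\xi<\eta\}$ with $\lambda_\xi=\max\pcfsig(A_\xi)$ is too small: since $\max\pcfsig(A_\xi)$ can be strictly below $\max\pcf(A_\xi)$, the generator $B_{\lambda_\xi}$ need not cover $A_\xi$, and the part of $A_\xi$ outside it (a $J_{<\lambda_\xi}[A_\xi]$-positive set) is simply discarded by your construction. What is needed, for each $\xi$, is a set $C_\xi\subseteq\pcfsig(A_\xi)$ of cardinality \emph{less than $\sigma$} whose generators cover all of $A_\xi$ (Shelah's compactness for $\sigma$-complete pcf, Claim~6.7F of~\cite{430}); one then takes $C=\bigcup_{\xi<\eta}C_\xi$, which still has cardinality at most $\sigma\cdot\eta=\eta$ because the hypothesis~(\ref{eqn4}) forces $\sigma\le\eta$. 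The conclusion $\lambda\in\pcfsig(C)$ is then reached by a second application of compactness to $C$ together with transitivity of the generating sequence (if $\lambda\notin\pcfsig(C)$, fewer than $\sigma$ generators indexed by cardinals below $\lambda$ would cover $C$ and hence, by transitivity, all of $A$, contradicting the properness of the $\sigma$-complete ideal generated by $J_{<\lambda}[A]$), not by exhibiting a scale.

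The specific step that fails in your write-up is the verification that $\langle g_\alpha:\alpha<\lambda\rangle$ is $<_{J^*}$-increasing and cofinal. The ideal $J_\xi$ witnessing $\lambda_\xi=\tcf(\prod A_\xi/J_\xi)$ bears no relationship to the restriction $J\restriction A_\xi$ of the global ideal; indeed each $A_\xi$ may itself lie in $J$, which is consistent with $\sigma$-completeness of $J$ precisely because there are $\eta\ge\sigma$ pieces. Hence $f_\alpha<_Jf_{\alpha'}$ gives no control over $f_\alpha\restriction A_\xi$ versus $f_{\alpha'}\restriction A_\xi$ modulo $J_\xi$, and $g_\alpha(\lambda_\xi)$ can oscillate in a $J^*$-positive way. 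Your appeal to closedness of the generating sequence to make $J_\xi$ and $J$ ``coherent'' is not a pcf fact: closedness says $\pcf(B_\mu)\cap C=B_\mu$ and says nothing about arbitrary $\sigma$-complete ideals on the pieces. Likewise, maximality of $\lambda_\xi$ in $\pcfsig(A_\xi)$ gives an upper bound on $\sigma$-complete cofinalities of $A_\xi$ but provides neither the covering nor the coherence your translation needs. Repairing the argument amounts to replacing the single $\lambda_\xi$ by the covering family $C_\xi$ and running the compactness-plus-transitivity contradiction, which is essentially a different proof.
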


This is a template for what we might call a reduction-in-size theorem.  The point is that we are replacing the set $A$ by set $C$ whose size is under our control, and doing it in such a way that $\lambda$ is still captured by $\pcfsig(C)$.

\begin{proof}
Looking at our assumptions, it is clear that we may assume that $\eta$ is at most the cardinality of $A$.  The assumptions also imply that $\eta$ is at least $\sigma$, as otherwise  $\pcfsig(A)$ would simply be the union of the various $\pcfsig(A_\xi)$ for $\xi<\eta$, and this would violate~(\ref{eqn4}).  Thus, we can assume $\sigma\leq\eta\leq |A|$.

We may also assume that $|A|^+$ is strictly less than $\min(A)$, and so (by setting $\kappa=|A|^+$) we can find a model $N$ as in Theorem~\ref{blackbox} containing $A$ and the sequence $\langle A_\xi:\xi<\eta\rangle$.   Note that since $\kappa+1\subseteq N$, we know that $A$ is a subset of $N$ and each individual element $A_\xi$ of our sequence will be in $N$ as well.  Since $|A_\xi|<\kappa$ for each $\xi$, it follows that $A_\xi$ will be a subset of $N\cap\pcf(A)$ too.

Let $\langle B_\lambda:\lambda\in N\cap\pcf(A)\rangle$ be the transitive sequence from Theorem~\ref{blackbox}. Conclusion (2) of this theorem tells us there is a sequence $\langle C_\xi:\xi<\eta\rangle$ in $N$ such that each $C_\xi$ is a subset of $\pcfsig(A_\xi)$ of cardinality less than~$\sigma$ and
\begin{equation}
A_\xi\subseteq\bigcup_{\lambda\in C_\xi}B_\lambda.
\end{equation}
We set
\begin{equation}
C=\bigcup_{\xi<\eta}C_\xi.
\end{equation}
Then $|C|\leq\eta\cdot\sigma=\eta$, and $C$ is definable from the sequence $\langle C_\xi:\xi<\eta\rangle$ in~$N$.

We now show that $\lambda$ is in $\pcfsig(C)$, which will finish the proof.  To do this, we apply conclusion (2) of Theorem~\ref{blackbox} again, this time to the single set $C\in N$. We obtain a set $D\in N$ of cardinality less than~$\sigma$ such that
\begin{equation}
\label{tallis}
D\subseteq\pcfsig(C)\subseteq \pcfsig(A)
\end{equation}
and
\begin{equation}
\label{eqn3}
C\subseteq\bigcup_{\lambda\in D} B_\lambda.
\end{equation}
This is the crucial point where we need for our generators to work in $N\cap\pcf(A)$ rather than just in the potentially smaller set~$A$, as we are applying a compactness argument to cover a subset of $N\cap\pcf(A)$.

We know that $\max\pcf(C)$ is at most~$\lambda$ because
\begin{equation}
\max\pcf(\pcf(A))=\max\pcf(A).
\end{equation}
Thus, if $\lambda$ fails to be in~$\pcfsig(C)$, it must be the case that
\begin{equation}
D\subseteq\pcfsig(A)\cap\lambda,
\end{equation}
that is, all members of $D$ must be less than $\lambda$.

We now have an untenable situation.  Since $\lambda$ is in $\pcfsig(A)$, the $\sigma$-complete ideal on $A$ generated by $J_{<\lambda}[A]$ must be a proper ideal.  But for $\epsilon\in D$, we know $B_\epsilon\cap A$ is in the ideal $J_{<\lambda}[A]$, and since $|D|<\sigma$ it follows that $A$ cannot be contained in the union of the sets $B_\epsilon$ for $\epsilon$ in $D$, that is,
\begin{equation}
\label{fork}
A\nsubseteq \bigcup_{\epsilon\in D}B_\epsilon.
\end{equation}
On the other hand, though, for each $\zeta\in A$ there is a $\tau\in C$ with $\zeta\in B_\tau$, and there is a corresponding $\epsilon\in D$ with $\tau\in B_\epsilon$.  By transitivity, we know $B_\tau\subseteq B_\epsilon$ and therefore $\zeta\in B_\epsilon$. Thus
\begin{equation}
A\subseteq\bigcup_{\epsilon \in D} B_\epsilon,
\end{equation}
which contradicts (\ref{fork}). Thus,  $\lambda$ must be in $\pcfsig(C)$.
\end{proof}

\section{Eventually $\Gamma(\theta,\sigma)$-closed cardinals}

\subsection{Motivation} In this section we work with cardinals that are strong limits in a sense measured by pseudopowers.  Our aim is to generalize one of the main results of Chapter~VIII of {\em Cardinal Arithmetic}, where Shelah addresses basic questions about improving representations of cardinals. Simply recalling a small part of what Shelah establishes provides us with a good starting point.

\begin{theorem}[Shelah~\cite{371}]
\label{371}
Suppose $\aleph_0<\cf(\mu)\leq\theta<\mu$, and for every sufficiently large $\eta<\mu$,
\begin{equation}
\label{highlight}
\cf(\eta)\leq\theta\Longrightarrow \pp_\theta(\eta)<\mu.
\end{equation}
Then $\PP_\theta(\mu)=\PP_{\Gamma(\cf(\mu))}(\mu)$.  In fact, any $\lambda\in\PP_\theta(\mu)$ can be represented as the true cofinality of $\prod A/J^{\bd}[A]$ where $A$ is a cofinal subset of $\mu\cap\reg$ of cardinality~$\cf(\mu)$.
\end{theorem}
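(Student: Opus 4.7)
The plan is to use Theorem~\ref{practice} (with $\sigma=\aleph_0$) to shrink any size-$\theta$ representation of $\lambda\in\PP_\theta(\mu)$ down to size $\cf(\mu)$, and then to exploit hypothesis~(\ref{highlight}) to promote that to a bounded-ideal representation. Since the reverse inclusion $\PP_{\Gamma(\cf(\mu))}(\mu)\subseteq\PP_\theta(\mu)$ is just the monotonicity~(\ref{monotonicity}), it suffices to establish the \emph{In fact} form.

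Fix $\lambda\in\PP_\theta(\mu)$, witnessed by a progressive cofinal $A\subseteq\mu\cap\reg$ with $|A|\leq\theta$ and $\lambda=\max\pcf(A)$. Choose a strictly increasing sequence $\langle\mu_i:i<\cf(\mu)\rangle$ cofinal in $\mu$ with $\mu_0$ above the threshold at which (\ref{highlight}) takes effect, and put $A_i=A\cap\mu_i$. For each $i$, set $\eta_i:=\sup A_i\leq\mu_i$; either $\eta_i$ is regular, in which case $\max\pcf(A_i)=\eta_i<\mu$, or $\eta_i$ is singular with $\cf(\eta_i)\leq|A_i|\leq\theta$, in which case (\ref{highlight}) yields $\max\pcf(A_i)\leq\pp_\theta(\eta_i)<\mu$. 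Thus $\lambda\notin\pcf(A_i)$ for every~$i$, and Theorem~\ref{practice} (applied with $\sigma=\aleph_0$ and $\eta=\cf(\mu)$) produces $C\subseteq\bigcup_{i<\cf(\mu)}\pcf(A_i)$ of cardinality at most $\cf(\mu)$ with $\lambda\in\pcf(C)$. After enlarging by a cofinal chain in $\mu\cap\reg$, I may assume $C$ is progressive, cofinal in $\mu\cap\reg$, and of order type $\cf(\mu)$; since $C\subseteq\pcf(A)$ and pcf is transitive on progressive sets, $\max\pcf(C)=\max\pcf(A)=\lambda$.

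Using the closed transitive generating machinery of Corollary~\ref{nicecor}, fix a generator $B$ for $\lambda$ in $C$; the intended witness is $A^*=B$. First, $B$ must be cofinal in $\mu$ (hence $|B|=\cf(\mu)$): otherwise $\sup B=:\eta<\mu$ and $|B|\leq\cf(\mu)\leq\theta$ would force $\lambda=\max\pcf(B)\leq\pp_\theta(\eta)<\mu$, absurd. Since $B$ is a generator we have $J_{\leq\lambda}[B]=\mathcal{P}(B)$, so $\tcf(\prod B/J^{\bd}[B])=\lambda$ is equivalent to the identity $J_{<\lambda}[B]=J^{\bd}[B]$. The inclusion $J^{\bd}[B]\subseteq J_{<\lambda}[B]$ is immediate from (\ref{highlight}) applied to bounded subsets of~$B$ just as above. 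The reverse inclusion $J_{<\lambda}[B]\subseteq J^{\bd}[B]$, equivalently the assertion that $\max\pcf(Y)=\lambda$ for every unbounded $Y\subseteq B$, is the principal obstacle: hypothesis~(\ref{highlight}) directly tames only \emph{bounded} $\pcf$-contributions, so extending the control to unbounded ``gap'' subsets is where the closure and transitivity of the generating sequence underlying Corollary~\ref{nicecor} enter decisively, via an argument modelled on the contradiction step in the proof of Theorem~\ref{practice}: an unbounded $Y\subseteq B$ with $\max\pcf(Y)<\lambda$ would admit, via the generating-array machinery, a cover by generators of cardinals strictly below $\lambda$ that also covers $B$ modulo $J_{<\lambda}[B]$, contradicting the generator status of $B$ for $\lambda$ in $C$.
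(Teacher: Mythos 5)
Your first step --- shrinking a size-$\theta$ representation to a set $C$ of size $\cf(\mu)$ with $\lambda\in\pcf(C)$ via Theorem~\ref{practice} --- is sound, but it only reproduces Theorem~\ref{theoremclosed}, and the remaining promotion to the bounded ideal is exactly where your argument has a genuine gap. The contradiction you propose in the last step does not exist: if $Y\subseteq B$ is unbounded with $\max\pcf(Y)<\lambda$, then $Y\in J_{<\lambda}[C]$, and this is perfectly compatible with $B$ being a generator for $\lambda$ in $C$. Generators are determined only modulo $J_{<\lambda}[C]$ (if $B$ is a generator, so is $B\setminus Y$ for any $Y\in J_{<\lambda}$), so nothing in the closed/transitive generating machinery prevents a generator from containing unbounded $J_{<\lambda}$-null subsets. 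Hypothesis~(\ref{highlight}) controls only the bounded subsets of $B$, as you note, and no choice of generator by itself bridges that gap.

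The missing ingredient is the one the paper isolates explicitly: the finiteness that $\sigma=\aleph_0$ buys. The paper's route (Corollary~\ref{3.6}) applies Lemma~\ref{structurelemma} to get an array $\{\lambda^\alpha_i:\alpha<\cf(\mu),\ i\leq n\}$ with $\cf(\mu)$ rows but only \emph{finitely many} columns, satisfying $\lambda=\max\pcf(\{\lambda^\alpha_i:\alpha\in X,\ i\leq n\})$ for every unbounded $X\subseteq\cf(\mu)$. A finite induction on $i\leq n$ then extracts a single column and an unbounded $X$ such that $\max\pcf\{\lambda^\alpha_i:\alpha\in Y\}=\lambda$ for \emph{every} unbounded $Y\subseteq X$ (otherwise one shrinks $X$ successively through all $n+1$ columns and, since any ultrafilter on the array concentrates on some column, contradicts the displayed equality). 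On that column $J_{<\lambda}$ coincides with $J^{\bd}$, which is the desired bounded-ideal representation. Without exploiting the finiteness of the rows, the best one can conclude is a representation by a $\sigma$-complete ideal on a set of size $\cf(\mu)$ --- precisely the weaker conclusion of Theorem~\ref{theoremclosed} --- and the paper emphasizes that this is the exact point of difference between that theorem and Theorem~\ref{371}.
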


The above is part of Corollary~1.6 on page 321 of~\cite{cardarith}, and we will shortly derive it  from our own work in this section.  For now, we wish to highlight the assumption~(\ref{highlight}): it expresses that~$\mu$ is (in a weak sense) a type of strong limit cardinal, and we will be working with such assumptions a lot in this section.  The conclusion of the above theorem can be described informally in terms of upgrading the  representation of~$\lambda$:  we are able to move from  an arbitrary representation of~$\lambda$ based on a set of cardinality at most $\theta$ to one based on a set of cardinality $\cf(\mu)$, with the added bonus that the ideal used in the representation is as simple as possible.  The results we prove in this section will have a similar flavor.

We start with our main hypothesis, a definition that is natural given the preceding discussion.

\begin{definition}
Let $\sigma$ and $\theta$ be regular cardinals, and suppose $\mu$ is singular with $\sigma\leq\cf(\mu)<\theta<\mu$.
\begin{enumerate}
\item  We say that $\mu$ is {\em eventually $\Gamma(\theta,\sigma)$-closed} if
for all sufficiently large $\nu<\mu$, if $\nu$ is singular with $\sigma\leq\cf(\nu)<\theta$ then $\pp_{\Gamma(\theta,\sigma)}(\nu)<\mu$.

\sk

\item We say that $\mu$ is  $\Gamma(\theta,\sigma)$-closed beyond~$\eta$ if $\eta<\mu$ and the above holds for all $\nu$ between $\eta$ and $\mu$.
\end{enumerate}
\end{definition}

Shelah has looked at such concepts in a more general setting. In particular, the third section of~\cite{410} briefly examines the idea of ``pcf inaccessibility''. The following lemma shows that our definition and his approach are essentially the same.

\begin{lemma}
\label{closed}
Suppose $\sigma<\theta$ are regular cardinals, and $\mu$ is a singular cardinal satisfying $\sigma\leq\cf(\mu)<\theta<\mu$. Then the following conditions are equivalent:
\begin{enumerate}
\item $\mu$ is eventually $\Gamma(\theta,\sigma)$-closed.
\item There is an $\eta<\mu$ such that if $A$ is a set of regular cardinals from the interval $(\eta,\mu)$ bounded below $\mu$, then
    \begin{equation}
    \pcfsig(A)\subseteq\mu.
    \end{equation}
\end{enumerate}
\end{lemma}
\begin{proof}
Assume (1), and choose $\eta<\mu$ such that $\mu$ is $\Gamma(\theta,\sigma)$-closed beyond~$\eta$.  Given a set $A$ satisfying the assumptions of~(2), suppose $\lambda$ is in $\pcfsig(A)\setminus\mu$.   By passing to the generator $B_\lambda[A]$, we may assume $\lambda$ is $\max\pcf(A)$, and so if $J$ is the $\sigma$-complete ideal on $A$ generated by $J_{<\lambda}[A]$ then
\begin{equation}
\lambda = \tcf\left(\prod A/ J\right).
\end{equation}
Now let $\xi$ be least ordinal with $A\cap\xi\notin J$. Our assumptions imply that $\xi$ is a singular cardinal of cofinality at most $|A|<\theta$, and $\lambda$ is $\Gamma(\theta,\sigma)$-representable at~$\xi$.  Since $\eta<\xi<\mu$, we have a contradiction.  The proof that (2) implies (1) is even easier: if $\mu$ is not eventually $\Gamma(\theta,\sigma)$-closed, then the associated representations show that (2) must fail.
\end{proof}

\subsection{Basic representation theorems} Turning to the main topic of this section, we begin with an application of Theorem~\ref{practice} to the question of representation.

\begin{theorem}
\label{theoremclosed}
Suppose $\sigma\leq\cf(\mu)<\theta<\mu$ with $\sigma$ and $\theta$ regular.  If $\mu$ is eventually $\Gamma(\theta,\sigma)$-closed, then
\begin{equation}
\PP_{\Gamma(\theta,\sigma)}(\mu) = \PP_{\Gamma(\sigma)}(\mu).
\end{equation}
\end{theorem}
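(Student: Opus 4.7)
The plan is to apply Theorem~\ref{practice} to upgrade a $\Gamma(\theta,\sigma)$-representation of $\lambda$ down to one of size $\cf(\mu)$. The inclusion $\PP_{\Gamma(\sigma)}(\mu)\subseteq\PP_{\Gamma(\theta,\sigma)}(\mu)$ is immediate from monotonicity~(\ref{monotonicity}) (since $(\cf\mu)^+\leq\theta$), so the work lies entirely in the reverse direction.

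Given $\lambda\in\PP_{\Gamma(\theta,\sigma)}(\mu)$ with witnessing representation $(A,J)$, I would first fix $\eta<\mu$ beyond which $\mu$ is eventually $\Gamma(\theta,\sigma)$-closed. By absorbing the bounded tail $A\cap(\eta+1)$ into $J$, I may assume $\min(A)>\eta$. Next I fix an increasing cofinal sequence $\langle\mu_\xi:\xi<\cf(\mu)\rangle$ in $\mu$ and write $A=\bigcup_{\xi<\cf(\mu)}A_\xi$ with $A_\xi=A\cap\mu_\xi$. Each $A_\xi$ is then a bounded subset of $(\eta,\mu)\cap\reg$ of cardinality less than $\theta$, so the combinatorial reformulation of eventual $\Gamma(\theta,\sigma)$-closure gives $\sup\pcfsig(A_\xi)<\mu<\lambda$; in particular $\lambda\notin\pcfsig(A_\xi)$ for any $\xi<\cf(\mu)$.

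This puts me in position to invoke Theorem~\ref{practice}, which yields a set $C\subseteq\bigcup_{\xi<\cf(\mu)}\pcfsig(A_\xi)$ with $|C|\leq\cf(\mu)$ and $\lambda\in\pcfsig(C)$. I would then show $C$ must be cofinal in $\mu$: otherwise $C$ is a bounded subset of $(\eta,\mu)\cap\reg$ of size less than $\theta$, and eventual closure forces $\sup\pcfsig(C)<\mu<\lambda$, contradicting $\lambda\in\pcfsig(C)$. Hence $|C|=\cf(\mu)$, and progressivity of $C$ follows from $\min(C)\geq\min(A)>|A|\geq\cf(\mu)=|C|$. Taking $J'$ to be the $\sigma$-complete ideal on $C$ (extending $J^{\bd}[C]$ by standing pcf convention) witnessing $\lambda=\tcf(\prod C/J')$, the pair $(C,J')$ is a $\Gamma((\cf\mu)^+,\sigma)$-representation of $\lambda$ at $\mu$, placing $\lambda$ in $\PP_{\Gamma(\sigma)}(\mu)$ as required.

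The main engine is Theorem~\ref{practice}, which I can treat as a black box. The only genuine verification that requires care is the cofinality of $C$ in $\mu$, and this is precisely where the eventual $\Gamma(\theta,\sigma)$-closure hypothesis pulls its weight: without it, nothing prevents the reduced set $C$ from collapsing to a bounded subset of $\mu$, which would produce a representation at some $\eta<\mu$ rather than at $\mu$ itself.
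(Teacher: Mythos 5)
Your proposal is correct and follows essentially the same route as the paper: decompose $A$ as $\bigcup_{\xi<\cf(\mu)}A\cap\mu_\xi$, use eventual closure to see $\lambda\notin\pcfsig(A\cap\mu_\xi)$, and invoke Theorem~\ref{practice} to get $C$ of size $\cf(\mu)$ with $\lambda\in\pcfsig(C)$. If anything you are more careful than the paper's own (very terse) proof, which stops at $\lambda\in\pcfsig(C)$ and leaves implicit the verifications you spell out -- that $C$ is cofinal in $\mu$, progressive, and admits a $\sigma$-complete ideal extending $J^{\bd}[C]$.
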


Before we give the proof, note that this result is a natural counterpart to Theorem~\ref{371}, with the only difference being the inclusion of the parameter~$\sigma$. This theorem asks for a weaker closure condition than~(\ref{highlight}) in the situation where $\sigma$ is uncountable, but it also has a weaker conclusion: it says only that if $\mu$ is eventually $\Gamma(\theta,\sigma)$-closed, then any cardinal representable at~$\mu$ via a $\sigma$-complete ideal on a set of cardinality less than $\theta$ is in fact representable at~$\mu$ via a $\sigma$-complete ideal on a set of cardinality~$\cf(\mu)$, the minimum possible, but the proof does not let us obtain a representation using the bounded ideal.

\begin{proof}
Suppose $A$ and $J$ witness that the cardinal $\lambda$ is $\Gamma(\theta,\sigma)$-representable at~$\mu$.
Since we may remove an initial segment of~$A$ if necessary, we may assume that $\mu$
 is $\Gamma(\theta,\sigma)$-closed beyond $\min(A)$, and by restricting to a suitable generator if necessary, we may assume
 that $\lambda= \max\pcf(A)$ as well.  Given $\langle \mu_\alpha:\alpha<\kappa\rangle$  increasing and cofinal in $\mu$, if we define
\begin{equation}
A_\alpha:= A\cap\mu_\alpha,
\end{equation}
then the hypotheses of Theorem~\ref{practice} are satisfied by $\langle A_\alpha:\alpha<\cf(\mu)\rangle$ and $\lambda$, and we obtain a set
\begin{equation}
\label{small}
C\subseteq\bigcup_{\alpha<\cf(\mu)}\pcfsig(A_\alpha)
\end{equation}
of cardinality less than $\theta$ such that
\begin{equation}
\lambda\in\pcfsig(C).
\end{equation}
Now let $I$ be the $\sigma$-complete ideal generated by $J_{<\lambda}[C]$.  We claim that $C$ and $I$ witness that $\lambda$ is $\Gamma(\theta)$-representable at $\mu$.

Certainly $C$ is a subset of $\mu\cap\reg$ because of (\ref{small}) and Lemma~\ref{closed}. We know $I$ is a proper ideal on $C$ because $\lambda$ is in $\pcfsig(C)$, and
\begin{equation}
\lambda = \tcf\left(\prod C/ I\right)
\end{equation}
because $\lambda=\max\pcf(C)$ and $I$ extends $J_{<\lambda}[C]$.
To finish, we need to show that $C$ is unbounded in $\mu$ and $I$ includes all initial segments of $C$, but this follows from Lemma~\ref{closed} as well: any subset of $C$ bounded below $\mu$ must be in $I$ because $\lambda$ is greater than $\mu$.
\end{proof}

We can do better than this, though, and the next lemma lies is the heart of our results. It shows that with assumptions similar to those used in Theorem~\ref{theoremclosed}, we are able to obtain representations of cardinals that are ``well-organized''. This will then allow us to show that the corresponding ideals in the representation satisfy stronger completeness conditions.

\begin{lemma}
\label{structurelemma}
Assume $\mu$ is eventually $\Gamma(\theta,\sigma)$-closed, where $\sigma$ and $\theta$ are regular, and $\sigma\leq\cf(\mu)<\theta$.  Suppose  $\lambda$ is $\Gamma(\theta,\tau)$-representable at~$\mu$ for some regular cardinal $\tau$ in the interval $[\sigma,\cf(\mu)]$.  Then we can find a cardinal $\sigma^*<\sigma$ and a set
\begin{equation}
C=\{\lambda_\varsigma^\alpha:\alpha<\cf(\mu)\text{ and }\varsigma<\sigma^*\}
\end{equation}
of regular cardinals less than $\mu$ such that
\begin{itemize}
\item $\pcfsig\{\lambda_\varsigma^\beta:\beta<\alpha\text{ and }\varsigma<\sigma^*\}\subseteq\mu$ for each $\alpha<\cf(\mu)$, and
\sk
\item if $X$ is an unbounded subset of $\cf(\mu)$ then
\begin{equation}
\lambda=\max\pcf(\{\lambda^\alpha_\varsigma:\alpha\in X\text{ and }\varsigma<\sigma^*\})
\end{equation}
and
\begin{equation}
\label{eqn11}
\lambda\in\pcftau(\{\lambda^\alpha_\varsigma:\alpha\in X\text{ and }\varsigma<\sigma^*\}).
\end{equation}
\end{itemize}
\end{lemma}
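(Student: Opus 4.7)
The plan is to begin with a $\Gamma(\theta,\tau)$-representation $(A,J)$ of~$\lambda$ and, after replacing $A$ by a suitable generator and deleting a bounded initial segment, arrange that $\lambda=\max\pcf(A)$ and that $\mu$ is $\Gamma(\theta,\sigma)$-closed beyond~$\min(A)$. Fix an increasing cofinal sequence $\langle\mu_\alpha:\alpha<\cf(\mu)\rangle$ in~$\mu$, set $A_\alpha:=A\cap\mu_\alpha$, and apply Corollary~\ref{nicecor} with parameter~$\sigma$ to this sequence. This produces a model~$N$ containing all the relevant data, a closed transitive generating sequence $\bar{\mathfrak{b}}=\langle B_\rho:\rho\in N\cap\pcf(A)\rangle$, and a family $\langle C_\alpha:\alpha<\cf(\mu)\rangle\in N$ with each $C_\alpha\subseteq\pcfsig(A_\alpha)$ of cardinality less than~$\sigma$ and $A_\alpha\subseteq\bigcup_{\rho\in C_\alpha}B_\rho$. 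By eventual $\Gamma(\theta,\sigma)$-closure, $\sup\pcfsig(A_\alpha)<\mu$, so each $C_\alpha$ lies below some $\mu'_\alpha<\mu$; after thinning we may assume $\alpha\mapsto\mu'_\alpha$ is strictly increasing and cofinal in~$\mu$.

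The central observation is that for every unbounded $X\subseteq\cf(\mu)$ the set $C_X:=\bigcup_{\alpha\in X}C_\alpha$ still covers~$A$ via $\bar{\mathfrak{b}}$, because $\bigcup_{\alpha\in X}A_\alpha=A$. Combining this with transitivity of $\bar{\mathfrak{b}}$ and the hypothesis $\lambda\in\pcftau(A)$, we repeat the contradiction-argument used in the proof of Theorem~\ref{practice}: a hypothetical cover of $C_X$ by fewer than~$\tau$ generators $B_\epsilon$ indexed below~$\lambda$ would, by transitivity, produce a cover of~$A$ of the same size, contradicting the $\tau$-completeness of the original representation. This yields $\lambda\in\pcftau(C_X)$, and since $C_X\subseteq\pcf(A)$ is progressive we also get $\max\pcf(C_X)=\lambda$.

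It remains to reorganize~$C:=\bigcup_\alpha C_\alpha$ as a $\cf(\mu)\times\sigma^*$ array. If $\cf(\mu)>\sigma$, regularity of~$\sigma$ supplies by pigeonhole a single $\sigma^*<\sigma$ and an unbounded $X_0\subseteq\cf(\mu)$ with $|C_\alpha|=\sigma^*$ for $\alpha\in X_0$; reindexing $X_0$ order-isomorphically by $\cf(\mu)$ via a bijection~$f$ and enumerating each $C_{f(\alpha)}$ as $\{\lambda^\alpha_\varsigma:\varsigma<\sigma^*\}$ gives the array. If $\cf(\mu)=\sigma$ then the interval $[\sigma,\cf(\mu)]$ collapses to $\{\sigma\}$, so $\tau=\sigma=\cf(\mu)$, and Theorem~\ref{theoremclosed} already provides a $\sigma$-complete representation on a set of cardinality~$\cf(\mu)$, for which $\sigma^*=1$ suffices. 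The first bullet of the conclusion is then immediate from eventual $\Gamma(\theta,\sigma)$-closure applied to the initial segments, which are bounded in~$\mu$ by the row-heights~$\mu'_{f(\alpha)}$ and of cardinality $<\theta$; the second bullet is the central observation transported through the reindexing.

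I expect the main obstacle to be making the central observation truly watertight: one must ensure that the generators witnessing a hypothetical $\tau$-cover of~$C_X$ can be taken from~$\bar{\mathfrak{b}}$ (that is, indexed by elements of~$N$) so that the transitivity argument against $\lambda\in\pcftau(A)$ actually applies. This is precisely the role of the internal covering property folded into Corollary~\ref{nicecor}; passing the localization of~$\pcf$ through~$N$ is the delicate step, and is where the reformulated version of Shelah's Claim~6.7A developed in Section~2 is doing the real work.
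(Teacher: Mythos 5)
Your proposal is correct and follows essentially the same route as the paper: reduce to $\lambda=\max\pcf(A)$ with $\mu$ closed beyond $\min(A)$, apply Corollary~\ref{nicecor} to the increasing sets $A_\alpha=A\cap\mu_\alpha$ to get the covering families $C_\alpha$ inside~$N$, run the transitivity argument from Theorem~\ref{practice} against every unbounded $X\subseteq\cf(\mu)$, and then fix $\sigma^*$ by pigeonhole on an unbounded set of indices. Your explicit treatment of the boundary case $\sigma=\cf(\mu)$ is a point the paper's own proof quietly sidesteps (it writes $\sigma<\cf(\mu)$ at the pigeonhole step), and your closing remark correctly identifies that the internal covering property of the generating array is what makes the contradiction argument legitimate for sets built outside~$N$.
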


The above is essentially a generalization of Shelah's result Theorem~\ref{371}. We will discuss this after the proof, and even show how his result follows easily from the lemma.

\begin{proof}
Suppose $A$ and $J$ are a $\Gamma(\theta,\sigma)$-representation of~$\lambda$ at~$\mu$.  Just as in the proof of Theorem~\ref{theoremclosed}, we may assume $\mu$ is $\Gamma(\theta,\sigma)$-closed beyond $\min(A)$ and that $\lambda$ is $\max\pcf(A)$.  Let $\langle \mu_\alpha:\alpha<\cf(\mu)\rangle$
be an increasing sequence cofinal in~$\mu$, and let $A_\alpha$ be $A\cap\mu_\alpha$.

We implement the argument of Theorem~\ref{practice} and make use of transitive generators.  To do this, we assume (without loss of generality) that $|A|<\kappa=\cf(\kappa)<\min(A)$, and let $N$ and $\bar{\mathfrak{b}}$ be as inTheorem~\ref{blackbox} with $N$ containing all objects under discussion in the preceding paragraph.  By properties of $\bar{\mathfrak{b}}$, in the model $N$ there is a sequence of sets $C_\alpha$ for $\alpha<\cf(\mu)$ such that
\begin{itemize}
\item $C_\alpha$ is a subset of $\pcfsig(A_\alpha)$ of cardinality less than $\sigma$, and
\sk
\item $A_\alpha\subseteq\bigcup_{\lambda\in C_\alpha} B_\lambda$.
\end{itemize}
Since the sequence $\langle A_\alpha:\alpha<\cf(\mu)\rangle$ is increasing, it follows that whenever $X$ is an unbounded subset of~$\cf(\mu)$
we have
\begin{equation}
A\subseteq\bigcup_{\alpha\in X}\bigcup_{\lambda\in C_\alpha}B_\lambda,
\end{equation}
and so the transitivity arguments from the proof of Theorem~\ref{practice} tells us
\begin{equation}
\label{eqn9}
\lambda=\max\pcf\left(\bigcup_{\alpha\in X}C_\alpha\right)
\end{equation}
and
\begin{equation}
\label{eqn8}
\lambda\in\pcftau\left(\bigcup_{\alpha\in X}C_\alpha\right).
\end{equation}

Since each $C_\alpha$ has cardinality less than $\sigma<\cf(\mu)$, we can (by passing to an unbounded subset of $\cf(\mu)$) assume each $C_\alpha$ is of some fixed cardinality $\sigma^*<\sigma$, say
\begin{equation}
C_\alpha = \{\lambda^\alpha_\varsigma:\varsigma<\sigma^*\},
\end{equation}
and we are done by letting $C$ be the union of the sets $C_\alpha$.
\end{proof}

Note that we do not claim that the sets $C_\alpha$ are disjoint, and they may very well overlap.  It is helpful to visualize $C$ as an array of regular cardinals with $\cf(\mu)$ rows and $\sigma^*$ columns.   In this interpretation, row $\alpha$ corresponds to $C_\alpha$, and we get a corresponding column for each fixed $\varsigma<\sigma^*$.

The pcf structure of $C$ transfers to the index set $\Lambda = \cf(\mu)\times\sigma^*$ in the natural way, and we may define an ideal $J$ on
$\Lambda$ by
\begin{equation}
\mathcal{X}\in J\Longleftrightarrow \max\pcf\left(\{\lambda^\alpha_\varsigma:(\alpha,\varsigma)\in \mathcal{X}\}\right)<\lambda^*.
\end{equation}
With this point of view, we see that for $X\subseteq\cf(\mu)$,
\begin{equation}
\{(\alpha,\varsigma):\alpha\in X\text{ and }\varsigma<\sigma^*\}\in J\Longleftrightarrow \text{$X$ is bounded in $\cf(\mu)$}.
\end{equation}
Note as well that the $\tau$-complete ideal generated by~$J$ is a proper ideal because of~(\ref{eqn8}).

It is helpful to look back and compare our situation with the conclusion of~Theorem~\ref{371}.  We have not managed to represent $\lambda$ as the true cofinality of a product of cardinals modulo the bounded ideal, but we have come close! What goes wrong is that
the rows $C_\alpha$ in our array are not singletons, and instead all we know is that they all have a fixed cardinality~$\sigma^*$ less than~$\sigma$. What Shelah does in~\cite{371} is note that if this cardinality happens to be finite, then we can improve the situation and get the ideal to consist of just the bounded sets:

\begin{corollary}[Theorem~\ref{371}, due to Shelah~\cite{371}]
\label{3.6}
Suppose $\aleph_0<\cf(\mu)\leq\theta<\mu$ and $\mu$ is eventually $\Gamma(\theta^+,\aleph_0)$-closed, that is, for all sufficiently large $\nu<\mu$,
\begin{equation}
\cf(\nu)\leq\theta\Longrightarrow \pp_\theta(\nu)<\mu,
\end{equation}
Then any member of $\PP_{\theta}(\mu)$ has a representation of the form $(C, J^{\bd}[C])$ where $C$ is unbounded in $\mu\cap\reg$
 of order-type~$\cf(\mu)$ and $J^{\bd}[C]$ is the ideal of bounded subsets of~$C$.
\end{corollary}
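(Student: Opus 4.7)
The plan is to apply Lemma~\ref{structurelemma} with the parameters $\sigma=\tau=\aleph_0$ and $\theta^+$ in place of $\theta$, and then exploit the resulting finiteness of $\sigma^*$ to collapse the two-dimensional array down to a single cofinal sequence of regular cardinals. Note that by definition $\PP_\theta(\mu)=\PP_{\Gamma(\theta^+,\aleph_0)}(\mu)$, so the hypothesis that $\mu$ is eventually $\Gamma(\theta^+,\aleph_0)$-closed matches the closure requirement of Lemma~\ref{structurelemma} exactly, and any $\lambda\in\PP_\theta(\mu)$ is $\Gamma(\theta^+,\aleph_0)$-representable at~$\mu$.

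Applying the lemma to such a $\lambda$ produces a value $\sigma^*<\aleph_0$---so $\sigma^*=n$ is a finite natural number---together with an array $\{\lambda^\alpha_\varsigma:\alpha<\cf(\mu),\,\varsigma<n\}$ of regular cardinals below~$\mu$. Writing $C_\alpha=\{\lambda^\alpha_\varsigma:\varsigma<n\}$ and re-indexing the cofinal sequence $\langle\mu_\alpha\rangle$ using the first bullet of the lemma, I would arrange that $\lambda^\beta_\varsigma<\mu_\alpha$ whenever $\beta<\alpha$, so that $A'=\bigcup_\alpha C_\alpha$ is a cofinal subset of $\mu\cap\reg$ of cardinality $\cf(\mu)$; moreover every bounded-in-$\mu$ subset of~$A'$ lies in some initial rectangle of the array and therefore has $\max\pcf<\mu<\lambda$, so $J^{\bd}[A']\subseteq J_{<\lambda}[A']$.

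The main obstacle, and where the finiteness $n<\aleph_0$ is essential, is the collapse from~$A'$ to a single cofinal sequence~$C$ whose $J^{\bd}[C]$-quotient has true cofinality exactly~$\lambda$. My strategy is to put $B=B_\lambda[A']$ and first observe that $Y=\{\alpha:C_\alpha\cap B\ne\emptyset\}$ contains a tail of $\cf(\mu)$: otherwise $\bigcup_{\alpha\notin Y}C_\alpha$ would be an unbounded union missing~$B$, contradicting the second bullet of Lemma~\ref{structurelemma}. Using pigeonhole on the $n$ columns, one selects one element $\lambda_\alpha\in C_\alpha\cap B$ per $\alpha\in Y$ in such a way that $C=\{\lambda_\alpha:\alpha\in Y\}$ inherits $\max\pcf(C')=\lambda$ for every unbounded subselection $C'\subseteq C$. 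Once this is arranged, the staircase property together with $C\subseteq B$ forces $B_\lambda[C]$ to be cobounded in~$C$, yielding $\tcf(\prod C/J^{\bd}[C])=\lambda$; a final thinning to a strictly increasing subsequence of length $\cf(\mu)$ produces the desired cofinal subset of $\mu\cap\reg$.
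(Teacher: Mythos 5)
Your setup matches the paper's: apply Lemma~\ref{structurelemma} with $\sigma=\tau=\aleph_0$ to get a finite number $n$ of columns, note the staircase property gives $J^{\bd}\subseteq J_{<\lambda}$ on the union, and then try to extract a single column. The gap is in the extraction step. You write that ``using pigeonhole on the $n$ columns, one selects one element $\lambda_\alpha\in C_\alpha\cap B$ per $\alpha\in Y$ in such a way that $\max\pcf(C')=\lambda$ for every unbounded subselection $C'\subseteq C$,'' but pigeonhole does not deliver this. What pigeonhole gives is a fixed column index $\varsigma$ and an unbounded $Y'$ with $\lambda^\alpha_\varsigma\in B_\lambda[A']$ for all $\alpha\in Y'$; membership in a generator does not control the $\max\pcf$ of subsets. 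A subset of $B_\lambda[A']$ can perfectly well lie in $J_{<\lambda}[A']$ (indeed even the full selected column could), and \emph{a fortiori} an unbounded subselection of your $C$ could have $\max\pcf<\lambda$, which would put an unbounded set into $J_{<\lambda}[C]$ and destroy the conclusion $J_{<\lambda}[C]=J^{\bd}[C]$. The property you need --- a column $i$ and an unbounded $X$ such that \emph{every} unbounded $Y\subseteq X$ satisfies $\max\pcf\{\lambda^\alpha_i:\alpha\in Y\}=\lambda$ --- is exactly the hard point, and it is where the finiteness of $n$ is actually spent.

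The paper obtains it by contradiction: if no such pair $(i,X)$ exists, then for each $i\le n$ and each unbounded $X$ one can shrink to an unbounded $Y\subseteq X$ on which column $i$ has $\max\pcf<\lambda$; iterating this $n+1$ times produces a single unbounded $X$ on which every column is small, and since any ultrafilter on the rectangle $\{\lambda^\alpha_i:\alpha\in X,\ i\le n\}$ must concentrate on one of the finitely many columns, the whole rectangle has $\max\pcf<\lambda$, contradicting the second bullet of Lemma~\ref{structurelemma}. Your detour through the generator $B_\lambda[A']$ is not needed once this descent argument is in place (and cannot replace it), so you should substitute the finite shrinking argument for the ``pigeonhole'' sentence; the rest of your write-up then goes through.
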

 \begin{proof}
Suppose $\lambda=\tcf\prod A/ J$ where $A$ is cofinal in~$\mu$ of cardinality at most~$\theta$ and $J$ is an ideal on $A$ extending the bounded ideal.  We apply Lemma~\ref{structurelemma} to $\Gamma(\theta^+,\aleph_0)$ and obtain $n<\omega$ and $C=\langle \lambda^\alpha_i:i\leq n\rangle$
 such that
 \begin{itemize}
 \item $\sup\pcf\{\lambda^\beta_i:\beta<\alpha\text{ and }i\leq n\}$ is less than $\mu$ for each $\alpha<\cf(\mu)$, and
 \sk
 \item $\lambda=\max\pcf(\{\lambda^\alpha_i:\alpha\in X\text{ and }i\leq n\})$ for any unbounded $X\subseteq\cf(\mu)$.
 \end{itemize}
We now claim there is an $i\leq n$ and an unbounded $X\subseteq\cf(\mu)$ such that
\begin{equation}
\lambda=\max\pcf\{\lambda^\alpha_i:\alpha\in Y\}
\end{equation}
for every unbounded $Y\subseteq X$.  This is enough, as letting $D=\{\lambda^\alpha_i:i\in Y\}$, it follows easily that the ideal $J_{<\lambda}[D]$ consists of the bounded ideal $J^{\bd}[D]$.

We establish this by contradiction:  if there are no such $i$ and $X$, then for every $i\leq n$ and every unbounded $X\subseteq\cf(\mu)$
there is an unbounded $Y\subseteq X$ such that
\begin{equation}
\max\pcf\{\lambda^\alpha_i:\alpha\in Y\}<\lambda.
\end{equation}
Working by induction, we find a single unbounded $X\subseteq \cf(\mu)$ such that
\begin{equation}
(\forall i\leq n)\left(\max\pcf\{\lambda^\alpha_i:\alpha\in X\}\right)<\lambda.
\end{equation}
But then
\begin{equation}
\max\pcf\{\lambda^\alpha_i:\alpha\in X\text{ and }i\leq n\}<\lambda,
\end{equation}
as any ultrafilter on this set must contain one of the columns, and we have a contradiction.
\end{proof}

We note that Shelah obtains even nicer representations in Chapter~VIII of {\em Cardinal Arithmetic}, but the above observation is at the heart of his argument.

\subsection{Better representations} Moving on to the more general situation, we can apply Lemma~\ref{structurelemma} to obtain the following improvement of Theorem~\ref{theoremclosed}.

\begin{theorem}
\label{theoremgeneral}
Suppose $\mu$ is eventually $\Gamma(\theta,\sigma)$-closed for some regular $\sigma$ and $\theta$ with
\begin{equation}
\sigma<\cf(\mu)<\theta<\mu,
\end{equation}
and $\tau<\upsilon$ are regular cardinals in the interval $[\sigma,\cf(\mu)]$.  If
\begin{equation}
\label{eqn10}
\tau\leq\rho<\upsilon\Longrightarrow \cov(\rho,\rho,\sigma,2)<\cf(\mu)
\end{equation}
then
\begin{equation}
\PP_{\Gamma(\theta,\tau)}(\mu)=\PP_{\Gamma(\upsilon)}(\mu).
\end{equation}
\end{theorem}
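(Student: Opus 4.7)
The inclusion $\PP_{\Gamma(\upsilon)}(\mu) \subseteq \PP_{\Gamma(\theta,\tau)}(\mu)$ falls out of monotonicity~(\ref{monotonicity}): since $\tau \leq \upsilon < (\cf\mu)^+ \leq \theta$ and $\PP_{\Gamma(\upsilon)}(\mu) = \PP_{\Gamma((\cf\mu)^+,\upsilon)}(\mu)$ by definition, the containment is immediate. The real work lies in the reverse inclusion, so I fix $\lambda \in \PP_{\Gamma(\theta,\tau)}(\mu)$ and plan to produce a $\Gamma(\upsilon)$-representation of $\lambda$ at $\mu$. Feeding $\lambda$ into Lemma~\ref{structurelemma} supplies some $\sigma^* < \sigma$ and an array $C = \{\lambda^\alpha_\varsigma : \alpha<\cf(\mu),\,\varsigma<\sigma^*\}$ of regulars below $\mu$ satisfying its two bullets; taking $X=\cf(\mu)$ in the second bullet gives $\lambda = \max\pcf(C)$ and $\lambda \in \pcftau(C)$, and choosing the original representing set $A$ with $\min(A) > \cf(\mu)$ forces $\min(C) > \cf(\mu) \geq |C|$, so $C$ is a progressive cofinal subset of $\mu\cap\reg$ of cardinality $\cf(\mu)$.

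Write $\Lambda := \cf(\mu)\times\sigma^*$ and $\phi(\alpha,\varsigma) := \lambda^\alpha_\varsigma$, and pull $J_{<\lambda}[C]$ back to an ideal $J^0$ on $\Lambda$ by $\mathcal{X} \in J^0 \Longleftrightarrow \phi[\mathcal{X}] \in J_{<\lambda}[C]$. Conclusion~(\ref{eqn11}) of Lemma~\ref{structurelemma}, applied with $X=\cf(\mu)$, is precisely the statement that the $\tau$-complete ideal $J^0_\tau$ generated by $J^0$ is proper on $\Lambda$. The core of the plan is to upgrade this to properness of the $\upsilon$-complete ideal $J^0_\upsilon$ generated by $J^0$: once this is in hand, transporting through $\phi$ shows that the $\upsilon$-complete extension of $J_{<\lambda}[C]$ on $C$ is proper, which together with $\lambda = \max\pcf(C)$ yields a $\upsilon$-complete ideal on $C$ whose reduced product has true cofinality $\lambda$, witnessing $\lambda \in \PP_{\Gamma(\upsilon)}(\mu)$.

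For the upgrade, I plan a proof by contradiction: suppose $\Lambda = \bigcup_{i<\kappa}\mathcal{X}_i$ with $\kappa < \upsilon$ and $\mathcal{X}_i \in J^0$. Properness of $J^0_\tau$ forces $\tau \leq \kappa$, placing $\kappa$ in the interval where hypothesis~(\ref{eqn10}) applies and so supplying a cofinal family $\mathcal{P} \subseteq [\kappa]^{<\sigma}$ of cardinality less than $\cf(\mu)$. For each $\alpha < \cf(\mu)$, the row $\{\alpha\}\times\sigma^*$ has size $\sigma^* < \sigma$, so the ``hit set'' $I_\alpha := \{i<\kappa : (\alpha,\varsigma)\in\mathcal{X}_i\text{ for some }\varsigma\}$ has cardinality less than $\sigma$; pick $P_\alpha \in \mathcal{P}$ covering $I_\alpha$. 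Regularity of $\cf(\mu)$ together with $|\mathcal{P}| < \cf(\mu)$ produces an unbounded $X \subseteq \cf(\mu)$ and a fixed $P \in \mathcal{P}$ with $P_\alpha = P$ for all $\alpha \in X$. Then $X\times\sigma^* \subseteq \bigcup_{i\in P}\mathcal{X}_i$, and $|P| < \sigma \leq \tau$ places $X\times\sigma^*$ in $J^0_\tau$; pushing through $\phi$, the set $\{\lambda^\alpha_\varsigma : \alpha\in X,\,\varsigma<\sigma^*\}$ lies in the $\tau$-complete extension of $J_{<\lambda}[C]$, directly contradicting the $\lambda \in \pcftau(\cdot)$ clause of~(\ref{eqn11}) applied to this same unbounded $X$.

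The main obstacle is juggling the three completeness parameters simultaneously: we cannot attack a $<\upsilon$-sized cover of $\Lambda$ by $J^0$-sets at its native level, but hypothesis~(\ref{eqn10}) lets pigeonhole over $\mathcal{P}$ collapse it into a $<\sigma$-sized sub-cover of an unbounded ``sub-array'', at which point the $\tau$-complete output of Lemma~\ref{structurelemma} becomes available. The row width $\sigma^* < \sigma$ delivered by the structure lemma is exactly what makes the covering number $\cov(\rho,\rho,\sigma,2)$ the correct quantity to appear in~(\ref{eqn10}).
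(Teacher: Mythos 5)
Your setup (applying Lemma~\ref{structurelemma}, transferring to the index set $\cf(\mu)\times\sigma^*$, and reducing everything to showing the $\upsilon$-complete ideal generated by $J_{<\lambda}[C]$ is proper) matches the paper, and the row-by-row pigeonhole over a small covering family is the right mechanism. But there is a genuine gap in how you use hypothesis~(\ref{eqn10}). The quantity $\cov(\kappa,\kappa,\sigma,2)<\cf(\mu)$ supplies a family $\mathcal{P}\subseteq[\kappa]^{<\kappa}$ of size $<\cf(\mu)$ such that every $U\in[\kappa]^{<\sigma}$ is contained in a single member of $\mathcal{P}$; it does \emph{not} supply a family $\mathcal{P}\subseteq[\kappa]^{<\sigma}$ --- that would be $\cov(\kappa,\sigma,\sigma,2)=\cf([\kappa]^{<\sigma},\subseteq)$, which can be far larger and is not controlled by~(\ref{eqn10}). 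Consequently your pigeonholed set $P$ satisfies only $|P|<\kappa$, not $|P|<\sigma$, and since $\tau\leq|P|$ is entirely possible, the final step ``$|P|<\sigma\leq\tau$ places $X\times\sigma^*$ in $J^0_\tau$'' fails: you get a covering of an unbounded subarray by fewer than $\kappa$ sets from $J^0$, which does not contradict the properness of the $\tau$-complete ideal.

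The repair is a minimality device, which is exactly how the paper closes the argument: choose $\rho$ to be the \emph{least} cardinal below $\upsilon$ such that some unbounded subarray is covered by $\rho$ sets from $J_{<\lambda}[C]$. Conclusion~(\ref{eqn11}) gives the base case $\tau\leq\rho$, and your pigeonhole argument (run with the correct $\mathcal{P}\subseteq[\rho]^{<\rho}$) then produces a covering of an unbounded subarray by $|P|<\rho$ such sets, contradicting the minimality of $\rho$. Without fixing $\rho$ minimal, your one-shot contradiction does not go through; everything else in your write-up is sound.
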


Before presenting a proof of this theorem, we note that condition~(\ref{eqn10}) holds if $\upsilon$ is less than $\tau^{+\omega}$, so at the very least, the theorem allows us to find representations that are ``more complete'', while simultaneously ensuring that the size of the set involved is at most~$\cf(\mu)$. In order to do this, we need to make the stronger assumption that $\sigma$ is strictly less than $\cf(\mu)$, and this is what allows us to achieve greater completeness in our representation.

\begin{proof}
Suppose $\lambda$ is $\Gamma(\theta,\tau)$-representable at~$\mu$, and apply Lemma~\ref{structurelemma} to obtain a cardinal $\sigma^*<\sigma$ and a set $C$ of regular cardinals $\lambda^\alpha_\varsigma$ (for $\alpha<\cf(\mu)$ and $\varsigma<\sigma^*$) as there. In particular, if $X$ is any unbounded subset of~$\cf(\mu)$, then
\begin{equation}
\lambda\in\pcftau\{\lambda^\alpha_\varsigma:\alpha\in X\text{ and }\varsigma<\sigma^*\}.
\end{equation}
It suffices to prove that the $\upsilon$-complete ideal on $C$ generated by $J_{<\lambda^*}[C]$ is proper.

Suppose by way of contradiction this is not the case.  Then there is a least cardinal $\rho<\upsilon$ such that for some sequence of sets $\langle D_i:i<\rho\rangle$ and unbounded $X\subseteq\cf(\mu)$, we have
\begin{itemize}
\item $D_i\in J_{<\lambda^*}[C]$ for each $i<\rho$, and
\sk
\item $\{\lambda^\alpha_\varsigma:\alpha\in X\text{ and }\varsigma<\sigma^*\}\subseteq \bigcup_{i<\rho}D_i$.
\end{itemize}
Clearly $\tau$ must be less than or equal to $\rho$, and by shrinking $C$ if necessary we may assume there are such sets $D_i$ for $i<\rho$ with
\begin{equation}
C\subseteq\bigcup_{i<\rho}D_i.
\end{equation}
Our assumptions tell us that $\cov(\rho,\rho,\sigma, 2)$ is less than~$\cf(\mu)$, so there is a family ~$\mathcal{P}$ such that
\begin{itemize}
\item $\mathcal{P}\subseteq [\rho]^{<\rho}$,
\sk
\item $|\mathcal{P}|<\cf(\mu)$, and
\sk
\item $(\forall U\in [\rho]^{<\sigma})(\exists V\in \mathcal{P})[U\subseteq V]$.
\end{itemize}
For each $\alpha<\cf(\mu)$, there is a set $Y_\alpha\in [\rho]^{<\sigma}$  such that
\begin{equation}
C_\alpha=\{\lambda^\alpha_\varsigma:\varsigma<\sigma^*\}\subseteq\bigcup_{i\in Y_\alpha} D_i,
\end{equation}
Since $|\mathcal{P}|<\cf(\mu)$, there is a single set $Y\in\mathcal{P}$ and an unbounded subset $X$ of~$\cf(\mu)$ such that
\begin{equation}
\label{eqn12}
\bigcup_{\alpha\in X}C_\alpha\subseteq\bigcup_{i\in Y} D_i.
\end{equation}
But now we have a contradiction, as we have shown that $\bigcup_{\alpha\in X}C_\alpha$ can be covered by $|Y|<\rho$ sets  from $J_{<\lambda^*}[C]$.
\end{proof}

Again, note that Theorem~\ref{theoremclosed} can only tell us that $\PP_{\Gamma(\theta,\tau)}(\mu)=\PP_{\Gamma(\tau)}(\mu)$ under the same assumptions as Theorem~\ref{theoremgeneral}. The improvement we obtain here is that we find a representation using an  $\upsilon$-complete ideal on a set of size~$\cf(\mu)$ rather than simply a $\tau$-complete one.

We close this section with the following corollary, which gives us a little information about the assumptions we make relating $\sigma$, $\tau$, and $\upsilon$.

\begin{corollary}
Suppose $\mu$ is eventually $\Gamma(\sigma)$-closed, where $\sigma<\cf(\mu)$.  If $\tau<\upsilon$ are regular cardinals in the interval $[\sigma,\cf(\mu)]$ and
\begin{equation}
\PP_{\Gamma(\upsilon)}(\mu)\subsetneqq \PP_{\Gamma(\tau)}(\mu),
\end{equation}
then there is a singular cardinal $\rho$ of cofinality less than $\sigma$ such that
\begin{equation}
\tau<\rho<\upsilon\leq\cf(\mu)<\cov(\rho,\rho,\sigma, 2),
\end{equation}
hence
\begin{equation}
\tau<\rho<\upsilon\leq\cf(\mu)<\cf([\rho]^{<\sigma},\subseteq)\leq\rho^{<\sigma}.
\end{equation}
\end{corollary}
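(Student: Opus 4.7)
The plan is to argue by contrapositive using Theorem~\ref{theoremgeneral}, applied with $\theta := (\cf(\mu))^+$. Under this choice $\Gamma(\theta,\tau)=\Gamma(\tau)$ and $\Gamma(\theta,\sigma)=\Gamma(\sigma)$, so the hypothesis that $\mu$ is eventually $\Gamma(\sigma)$-closed is exactly the closure assumption required by that theorem, and the inequalities $\sigma<\cf(\mu)<\theta<\mu$ are automatic (the last because $\mu$, being singular, is a limit cardinal).

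Suppose for contradiction that no $\rho$ as in the statement exists. Then for every cardinal $\rho$ in the interval $[\tau,\upsilon)$ we have $\cov(\rho,\rho,\sigma,2)<\cf(\mu)$, which is precisely condition~(\ref{eqn10}) of Theorem~\ref{theoremgeneral}. That theorem then yields $\PP_{\Gamma(\tau)}(\mu)=\PP_{\Gamma(\upsilon)}(\mu)$, contradicting the hypothesized strict containment. Hence there is some $\rho$ with $\tau\leq\rho<\upsilon$ and $\cov(\rho,\rho,\sigma,2)\geq\cf(\mu)$.

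It remains to pin down the finer properties of $\rho$. The key observation is the identification
\begin{equation*}
\cov(\kappa,\kappa,\sigma,2)=\cf\bigl([\kappa]^{<\sigma},\subseteq\bigr),
\end{equation*}
which is immediate from the definition (a ``$2$-cover of $[\kappa]^{<\sigma}$ in $[\kappa]^{<\kappa}$'' is just a cofinal family in $[\kappa]^{<\sigma}$ under~$\subseteq$). When $\cf(\kappa)\geq\sigma$, every $X\in[\kappa]^{<\sigma}$ is bounded in $\kappa$, so the initial segments of $\kappa$ form such a cofinal family and $\cov(\kappa,\kappa,\sigma,2)\leq\kappa$. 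Applied to $\tau$, which is regular with $\sigma\leq\tau<\cf(\mu)$, this gives $\cov(\tau,\tau,\sigma,2)\leq\tau<\cf(\mu)$, forcing $\rho>\tau$. Applied to $\rho$ itself, if $\rho$ were regular or had $\cf(\rho)\geq\sigma$ we would have $\cov(\rho,\rho,\sigma,2)\leq\rho<\upsilon\leq\cf(\mu)$, contradicting our choice of $\rho$; so $\rho$ must be singular with $\cf(\rho)<\sigma$. The final bound $\cov(\rho,\rho,\sigma,2)\leq\rho^{<\sigma}$ is just $\cf([\rho]^{<\sigma},\subseteq)\leq|[\rho]^{<\sigma}|$.

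The one slightly delicate point, and the main obstacle I anticipate, is that the contrapositive of Theorem~\ref{theoremgeneral} yields only $\cov(\rho,\rho,\sigma,2)\geq\cf(\mu)$, whereas the corollary asserts the strict inequality $\cf(\mu)<\cov(\rho,\rho,\sigma,2)$. I expect strictness to come from a marginal sharpening of the pigeonhole step in the proof of Theorem~\ref{theoremgeneral}: the strict containment $\PP_{\Gamma(\upsilon)}(\mu)\subsetneq\PP_{\Gamma(\tau)}(\mu)$ supplies a witness $\lambda$ whose failure of $\upsilon$-completeness should push the covering number for the associated $\rho$ strictly past $\cf(\mu)$. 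This is the only step in the plan that is not entirely routine.
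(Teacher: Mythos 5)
Your approach is the intended one: the corollary is stated in the paper without proof precisely because it is the contrapositive of Theorem~\ref{theoremgeneral} applied with $\theta=(\cf\mu)^+$, and your verification of the hypotheses, together with the elimination of the cases ``$\rho=\tau$'' and ``$\cf(\rho)\geq\sigma$'' via the initial-segment bound $\cov(\kappa,\kappa,\sigma,2)\leq\kappa$ when $\cf(\kappa)\geq\sigma$, is exactly right. One small imprecision: the identity $\cov(\kappa,\kappa,\sigma,2)=\cf([\kappa]^{<\sigma},\subseteq)$ is not ``immediate from the definition'' --- a $2$-cover is a family in $[\kappa]^{<\kappa}$, not in $[\kappa]^{<\sigma}$, so only the inequality $\cov(\kappa,\kappa,\sigma,2)\leq\cf([\kappa]^{<\sigma},\subseteq)$ comes for free. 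Fortunately that is the only direction your argument, and the corollary's final display, actually use.

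The gap you flag at the end is genuine, and I would not count on the ``marginal sharpening'' you hope for. The negation of condition~(\ref{eqn10}) delivers only $\cov(\rho,\rho,\sigma,2)\geq\cf(\mu)$, and the pigeonhole step in the proof of Theorem~\ref{theoremgeneral} truly requires the covering family $\mathcal{P}$ to have size strictly below the regular cardinal $\cf(\mu)$: if $|\mathcal{P}|=\cf(\mu)$, the assignment sending each $\alpha<\cf(\mu)$ to a member of $\mathcal{P}$ covering $Y_\alpha$ need not be constant (or even bounded) on any unbounded set, and replacing a single $Y\in\mathcal{P}$ by a union of $\cf(\mu)$ many of them destroys the bound $|Y|<\rho$, since $\rho<\upsilon\leq\cf(\mu)$. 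Nothing in the argument distinguishes the boundary case $\cov(\rho,\rho,\sigma,2)=\cf(\mu)$ from the case $\cov(\rho,\rho,\sigma,2)>\cf(\mu)$, and the witness $\lambda$ to the strict containment does not obviously help. So what your proof actually establishes is the corollary with $\cf(\mu)\leq\cov(\rho,\rho,\sigma,2)$ (and correspondingly $\cf(\mu)\leq\cf([\rho]^{<\sigma},\subseteq)$); to obtain the strict form as stated one needs a separate argument ruling out $\cov(\rho,\rho,\sigma,2)=\cf(\mu)$ for every $\rho\in(\tau,\upsilon)$, which neither your write-up nor the straightforward contrapositive supplies. Everything else is fine.
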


If we work with cardinals whose cofinality is $\aleph_n$ for some $n$, then the above situation cannot happen because there is no place for such a cardinal $\rho$. Thus, the following is immediate.

\begin{corollary}
Suppose $\mu$ is singular with $\aleph_0<\cf(\mu)<\aleph_\omega$.  If $\mu$ is eventually $\Gamma(\theta,\sigma)$-closed for some $\sigma<\cf(\mu)$, then
\begin{equation}
\PP_{\Gamma(\theta,\sigma)}(\mu)=\PP_{\Gamma(\cf(\mu))}(\mu).
\end{equation}
\end{corollary}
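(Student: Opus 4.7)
The plan is to derive the corollary by combining Theorem~\ref{theoremclosed} with the contrapositive of the corollary just proved. First, since $\mu$ is eventually $\Gamma(\theta,\sigma)$-closed with $\sigma<\cf(\mu)<\theta$, Theorem~\ref{theoremclosed} immediately yields $\PP_{\Gamma(\theta,\sigma)}(\mu)=\PP_{\Gamma(\sigma)}(\mu)$. Thus it suffices to prove $\PP_{\Gamma(\sigma)}(\mu)=\PP_{\Gamma(\cf(\mu))}(\mu)$, and the inclusion $\PP_{\Gamma(\cf(\mu))}(\mu)\subseteq \PP_{\Gamma(\sigma)}(\mu)$ is immediate from monotonicity~(\ref{monotonicity}).

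For the reverse inclusion, I would apply the preceding corollary with $\tau=\sigma$ and $\upsilon=\cf(\mu)$. Both are regular and lie in the interval $[\sigma,\cf(\mu)]$, with $\tau<\upsilon$ since $\sigma<\cf(\mu)$. The hypothesis that $\mu$ is eventually $\Gamma(\sigma)$-closed is not exactly what we assumed, but it follows from $\mu$ being eventually $\Gamma(\theta,\sigma)$-closed: by monotonicity~(\ref{monotonicity}) and the fact that $(\cf\mu)^+\leq\theta$, we have $\pp_{\Gamma((\cf\mu)^+,\sigma)}(\nu)\leq \pp_{\Gamma(\theta,\sigma)}(\nu)$ for any relevant $\nu<\mu$.

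Now suppose, toward contradiction, that the inclusion $\PP_{\Gamma(\cf(\mu))}(\mu)\subseteq \PP_{\Gamma(\sigma)}(\mu)$ were strict. The preceding corollary would then produce a singular cardinal $\rho$ satisfying $\sigma<\rho<\cf(\mu)$. But $\cf(\mu)<\aleph_\omega$ forces $\rho$ to be some $\aleph_k$ with $k<\omega$, and every such $\aleph_k$ is regular, contradicting the singularity of $\rho$. Hence equality must hold. There is no serious obstacle here; the only conceptual point to verify is the reduction of $\Gamma(\theta,\sigma)$-closure to $\Gamma(\sigma)$-closure so that the previous corollary actually applies.
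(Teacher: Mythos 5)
Your proof is correct and follows essentially the same route as the paper, which derives the result as immediate from the preceding corollary (with $\tau=\sigma$, $\upsilon=\cf(\mu)$) together with Theorem~\ref{theoremclosed}: no singular cardinal $\rho$ can exist below $\aleph_\omega$. Your explicit check that eventual $\Gamma(\theta,\sigma)$-closure implies eventual $\Gamma(\sigma)$-closure is a detail the paper leaves tacit, and it is handled correctly.
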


\section{The Pseudopower Dichotomy}

\subsection{Introducing the dichotomy} In the preceding section we analyzed the behavior of $\pp_{\Gamma(\theta,\tau)}(\mu)$ where $\mu$ is a singular cardinal that is eventually $\Gamma(\theta,\sigma)$-closed and $\sigma\leq\tau\leq\cf(\mu)$.  The analysis showed that if a cardinal is $\Gamma(\theta,\tau)$-representable at $\mu$, then it is has a $\sigma$-complete representation using the minimum possible size, $\cf(\mu)$, and the completeness of the representation can be increased if $\sigma$ is strictly less than $\tau$. In this section, we will analyze what happens when~$\mu$ is {\em not} eventually $\Gamma(\theta,\sigma)$-closed.  We start with the following result which formulates a fundamental dichotomy about singular cardinals.  This is not a new idea (in fact, the result we present is a relative of Fact~1.9 in~\cite{371}), and Shelah has made and used similar observations in other places. However, such dichotomies are extraordinarily useful for proving theorems in {\sf ZFC}, as they allow one to analyze situations by breaking into cases where each option carries non-trivial information.  We will refer to this result as {\em the Pseudopower Dichotomy}.

\begin{lemma}[{\sf Pseudopower Dichotomy}]
Suppose $\mu$ is a singular cardinal, and let $\sigma<\theta$ be regular cardinals with $\sigma\leq\cf(\mu)<\theta$.  Then exactly one of the following statements hold:
\begin{description}
\item[Option 1] $\mu$ is eventually $\Gamma(\theta,\sigma)$-closed.
\sk
\item[Option 2] $\mu$ is a limit of eventually $\Gamma(\theta,\sigma)$-closed cardinals $\eta$ for which
\begin{equation}
\sigma\leq\cf(\eta)<\theta,
\end{equation}
and
 \begin{equation}
 \label{option2}
 \PP_{\Gamma(\theta,\sigma)}(\mu)=\PP_{\Gamma(\theta,\sigma)}(\eta)\setminus\mu^+.
 \end{equation}
\end{description}
\end{lemma}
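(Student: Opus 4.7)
The plan is to show the two options are mutually exclusive and then show that the failure of Option 1 forces Option 2. Mutual exclusivity is easy: if $\mu$ is eventually $\Gamma(\theta,\sigma)$-closed then for all sufficiently large $\nu<\mu$ with $\sigma\leq\cf(\nu)<\theta$ one has $\pp_{\Gamma(\theta,\sigma)}(\nu)<\mu$, so every element of $\PP_{\Gamma(\theta,\sigma)}(\nu)$ is below $\mu$; but $\mu^+\in\PP_{\Gamma(\theta,\sigma)}(\mu)$, so the containment (\ref{option2}) is forced to fail, preventing Option 2.

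For the main direction, assume Option 1 fails. Unpacking the definition, the set
\begin{equation*}
S:=\{\nu<\mu : \nu \text{ is singular},\ \sigma\leq\cf(\nu)<\theta,\ \pp_{\Gamma(\theta,\sigma)}(\nu)\geq\mu\}
\end{equation*}
is unbounded in $\mu$. For each $\nu\in S$, Inverse Monotonicity (Proposition~\ref{inversemonotonicity}) yields $\PP_{\Gamma(\theta,\sigma)}(\mu)\subseteq\PP_{\Gamma(\theta,\sigma)}(\nu)$, so the set
\begin{equation*}
T:=\{\eta<\mu : \eta\text{ singular},\ \sigma\leq\cf(\eta)<\theta,\ \PP_{\Gamma(\theta,\sigma)}(\mu)\subseteq\PP_{\Gamma(\theta,\sigma)}(\eta)\}
\end{equation*}
is also unbounded. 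To reach Option 2 it suffices to show that the eventually $\Gamma(\theta,\sigma)$-closed elements of $T$ are unbounded in $\mu$.

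The key step is a minimality argument. Fix an arbitrary $\eta^{*}<\mu$ and let $\eta:=\min(T\setminus(\eta^{*}+1))$. I claim $\eta$ is eventually $\Gamma(\theta,\sigma)$-closed. Otherwise there are unboundedly many singular $\nu<\eta$ with $\sigma\leq\cf(\nu)<\theta$ and $\pp_{\Gamma(\theta,\sigma)}(\nu)\geq\eta$. For each such $\nu$, Inverse Monotonicity applied with $\eta$ in the role of the larger cardinal gives $\PP_{\Gamma(\theta,\sigma)}(\eta)\subseteq\PP_{\Gamma(\theta,\sigma)}(\nu)$, and then transitivity of set inclusion combined with $\eta\in T$ yields $\PP_{\Gamma(\theta,\sigma)}(\mu)\subseteq\PP_{\Gamma(\theta,\sigma)}(\nu)$, i.e.\ $\nu\in T$. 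Since such $\nu$ are unbounded below $\eta$, some lie in the interval $(\eta^{*},\eta)$, contradicting the minimality of $\eta$. Hence $\eta\in T$ is eventually $\Gamma(\theta,\sigma)$-closed; since $\eta^{*}$ was arbitrary, such $\eta$ are unbounded in $\mu$, delivering Option 2.

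The only place one needs care is the minimality step: one must verify that Inverse Monotonicity applies in both directions (from $\nu$ up to $\mu$ and from $\nu$ up to $\eta$), which requires checking that $\nu$ has the right cofinality range and that $\pp_{\Gamma(\theta,\sigma)}(\nu)$ dominates the target. Both conditions are built into how $S$ and the would-be witnesses against closure of $\eta$ are defined, so the obstacle is conceptual bookkeeping rather than technical difficulty.
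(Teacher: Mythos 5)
Your proof is correct and follows essentially the same route as the paper: failure of Option 1 yields unboundedly many witnesses with $\pp_{\Gamma(\theta,\sigma)}(\nu)\geq\mu$, Inverse Monotonicity converts this into the containment of the $\PP$ sets, and a least-witness minimality argument (which the paper states in one line and you spell out) shows the witness can be chosen eventually $\Gamma(\theta,\sigma)$-closed.
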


\begin{proof}
It is clear that the two options are mutually exclusive as $\mu^+$ is always $\Gamma(\theta,\sigma)$-representable at $\mu$.   Suppose that Option~1 fails for a cardinal~$\mu$. By definition, this means that for each $\xi<\mu$ there is a singular cardinal $\eta$ greater than $\xi$ such that
\begin{equation}
\label{eqn4.1}
\sigma\leq\cf(\eta)<\theta<\eta<\mu,
\end{equation}
and
\begin{equation}
\mu\leq \pp_{\Gamma(\theta,\sigma)}(\eta).
\end{equation}
By Inverse Monotonicity (Proposition~\ref{inversemonotonicity}), this implies
\begin{equation}
\PP_{\Gamma(\theta,\sigma)}(\mu)\subseteq\PP_{\Gamma(\theta,\sigma)}(\eta).
\end{equation}
Note as well that if $\eta$ is the LEAST such cardinal above $\xi$, then $\eta$ is $\Gamma(\theta,\sigma)$-closed beyond~$\xi$.  Thus, $\mu$ is a limit of eventually $\Gamma(\theta,\sigma)$-closed cardinals $\eta$ for which~(\ref{eqn4.1}) is true, and for which
\begin{equation}
\label{even}
\PP_{\Gamma(\theta,\sigma)}(\mu)\subseteq\PP_{\Gamma(\theta,\sigma)}(\eta).
\end{equation}

Let $\mathcal{X}$ be the collection of such cardinals $\eta<\mu$, and for each
 $\eta$ in $\mathcal{X}$, we define
\begin{equation}
\mathcal{Y}_\eta = \PP_{\Gamma(\theta,\sigma)}(\eta)\setminus\mu^+.
\end{equation}
If $\eta$ is in $\mathcal{X}$, then  $\mathcal{Y}_\eta$ is an non-empty interval of regular cardinals with minimum~$\mu^+$. Furthermore,  if $\eta<\nu$ in $\mathcal{X}$ then by Inverse Monotonicity we have
\begin{equation}
\mu^+\leq\sup(\mathcal{Y}_\nu)=\pp_{\Gamma(\theta,\sigma)}(\nu)\leq \pp_{\Gamma(\theta,\sigma)}(\eta)=\sup(\mathcal{Y}_\eta).
\end{equation}
Thus, the sequence $\langle\mathcal{Y}_\eta:\eta\in\mathcal{X}\rangle$ must eventually stabilize, say with value $\mathcal{Y}$.

If $\lambda\in\mathcal{Y}$, then by an application of the continuity property of Proposition~\ref{continuity} we know $\lambda\in\PP_{\Gamma(\theta,\sigma)}(\mu)$, and so for all sufficiently large $\eta\in\mathcal{X}$,
we have
\begin{equation}
\PP_{\Gamma(\theta,\sigma)}(\eta)\setminus\mu^+ = \mathcal{Y}\subseteq\PP_{\Gamma(\theta,\sigma)}(\mu).
\end{equation}
Combining this with (\ref{even}) establishes that Option 2 of the Pseudopower Dichotomy holds.
\end{proof}

As a corollary,  we have the following result  which we will use later in the paper.

\begin{corollary}
\label{useful}
Suppose $\mu$ is singular, while $\sigma<\theta$ are regular cardinals with $\sigma\leq\cf(\mu)<\theta$.  If
\begin{equation}
\pp_{\Gamma(\sigma)}(\mu)<\pp_{\Gamma(\theta,\sigma)}(\mu),
\end{equation}
then $\mu$ is a limit of singular cardinals $\eta$ such that
\begin{itemize}
\item $\sigma\leq\cf(\eta)<\theta<\eta<\mu$,
\item $\eta$ is eventually $\Gamma(\theta,\sigma)$-closed, and
\item $\pp_{\Gamma(\theta,\sigma)}(\mu)=\pp_{\Gamma(\theta,\sigma)}(\eta)$.
\end{itemize}
\end{corollary}
\begin{proof}
Our assumption implies that Option 2 of the Pseudopower Dichotomy holds by way of Theorem~\ref{theoremclosed}, and now the result follows immediately.
\end{proof}

\subsection{More on improving representations}

The proof of the Pseudopower Dichotomy provides a template for its applications.For example, suppose that a cardinal $\lambda$ is $\Gamma(\theta,\sigma)$-representable at $\mu$ and look at the Pseudopower Dichotomy. If Option~1 holds, then our work in the preceding section applies and we can get nice representations of $\lambda$ at $\mu$.  If, on the other hand, Option 2 holds sway, then we know instead that $\lambda$ has nice representations for many cardinals below~$\mu$, and we will be able to combine these representations (using the Continuity Property of pseudopowers from Proposition~\ref{continuity}) to produce a nice representation of~$\lambda$ at~$\mu$ itself.  The following definition will help us take advantage of these ideas.

\begin{definition}
\label{Xdef}
Suppose $\mu$ is singular, and $\sigma<\theta$ are regular with $\sigma\leq\cf(\mu)<\theta$.  We define $\mathcal{X}_{\Gamma(\theta,\sigma)}(\mu)$ to be the collection of cardinals~$\eta$ satisfying
\begin{itemize}
\item $\sigma\leq\cf(\eta)<\theta<\eta<\mu$,
\sk
\item $\eta$ is eventually $\Gamma(\theta,\sigma)$closed, and
\sk
\item $\PP_{\Gamma(\theta,\sigma)}(\mu)=\PP_{\Gamma(\theta,\sigma)}(\eta)\setminus\mu^+$.
\sk
\end{itemize}
\end{definition}

This set might very well be empty, but note that the Pseudopower Dichotomy can be reformulated as the statement ``either $\mathcal{X}_{\Gamma(\theta,\sigma)}(\mu)$ is unbounded in~$\mu$ or it is not''.  If $\mathcal{X}_{\Gamma(\theta,\sigma)}(\mu)$ is unbounded, then it will exert an influence over representations at~$\mu$ through continuity. For example, we have the following observation which yields the same conclusion as Theorem~\ref{theoremclosed}.

\begin{proposition}
\label{4.3}
Suppose $\{\eta\in \mathcal{X}_{\Gamma(\theta,\sigma)}(\mu): \cf(\eta)\leq\cf(\mu)\}$ is unbounded in~$\mu$.  Then
\begin{equation}
\PP_{\Gamma(\theta,\sigma)}(\mu)=\PP_{\Gamma(\sigma)}(\mu).
\end{equation}
\end{proposition}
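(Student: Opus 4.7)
The plan is to establish both inclusions, with monotonicity handling one direction and Continuity (Proposition~\ref{continuity}) combined with Theorem~\ref{theoremclosed} handling the other. The easy direction $\PP_{\Gamma(\sigma)}(\mu)\subseteq\PP_{\Gamma(\theta,\sigma)}(\mu)$ is immediate from~(\ref{monotonicity}), since $\PP_{\Gamma(\sigma)}(\mu)=\PP_{\Gamma((\cf\mu)^+,\sigma)}(\mu)$ and $(\cf\mu)^+\leq\theta$.

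For the nontrivial inclusion, I would fix $\lambda\in\PP_{\Gamma(\theta,\sigma)}(\mu)$ and aim to show $\lambda\in\PP_{\Gamma((\cf\mu)^+,\sigma)}(\mu)$ by verifying the hypothesis of Continuity with parameters $\theta'=(\cf\mu)^+$ and $\sigma'=\sigma$. Set
\begin{equation*}
E=\{\eta\in\mathcal{X}_{\Gamma(\theta,\sigma)}(\mu):\cf(\eta)\leq\cf(\mu)\},
\end{equation*}
which is unbounded in $\mu$ by hypothesis. For each $\eta\in E$, the definition of $\mathcal{X}_{\Gamma(\theta,\sigma)}(\mu)$ gives $\lambda\in\PP_{\Gamma(\theta,\sigma)}(\eta)$, and $\eta$ is eventually $\Gamma(\theta,\sigma)$-closed with $\sigma\leq\cf(\eta)<\theta<\eta$.

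Next I would invoke Theorem~\ref{theoremclosed} at each such $\eta$, which yields $\PP_{\Gamma(\theta,\sigma)}(\eta)=\PP_{\Gamma(\sigma)}(\eta)=\PP_{\Gamma((\cf\eta)^+,\sigma)}(\eta)$, so $\lambda$ is $\Gamma((\cf\eta)^+,\sigma)$-representable at $\eta$. Since $\cf(\eta)\leq\cf(\mu)$, we have $(\cf\eta)^+\leq(\cf\mu)^+$, and another application of~(\ref{monotonicity}) gives $\lambda\in\PP_{\Gamma((\cf\mu)^+,\sigma)}(\eta)$. After throwing away an initial segment of $E$ so that every remaining $\eta$ satisfies $\eta>(\cf\mu)^+$ (possible since $\mu>(\cf\mu)^+$), the set of such $\eta$ is still unbounded in~$\mu$ and meets the hypotheses of Proposition~\ref{continuity}. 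That proposition then delivers $\lambda\in\PP_{\Gamma((\cf\mu)^+,\sigma)}(\mu)=\PP_{\Gamma(\sigma)}(\mu)$.

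There is no real obstacle here; the argument is essentially bookkeeping, stringing together the inverse monotonicity encoded in the definition of $\mathcal{X}_{\Gamma(\theta,\sigma)}(\mu)$, the upgrade to a $\cf(\eta)$-sized representation provided by Theorem~\ref{theoremclosed}, and the pasting of representations below~$\mu$ into a representation at~$\mu$ itself via Continuity. The mildly delicate point is keeping careful track of the cofinality parameters so that $(\cf\eta)^+\leq(\cf\mu)^+$ and $\eta>(\cf\mu)^+$, so that all of the side conditions on the Continuity proposition are satisfied.
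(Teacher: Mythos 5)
Your proof is correct and follows essentially the same route as the paper's: use the definition of $\mathcal{X}_{\Gamma(\theta,\sigma)}(\mu)$ to get $\lambda$ represented at each $\eta$ in the unbounded set, upgrade via Theorem~\ref{theoremclosed} to a $\Gamma((\cf\eta)^+,\sigma)$-representation, and paste these together with Continuity. The only differences are cosmetic bookkeeping (the explicit monotonicity step and discarding an initial segment of $E$, the latter being automatic since every $\eta\in\mathcal{X}_{\Gamma(\theta,\sigma)}(\mu)$ already satisfies $\eta>\theta>\cf(\mu)$).
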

\begin{proof}
Suppose~$\lambda$ is $\Gamma(\theta,\sigma)$-representable at~$\mu$. Since Option~2 holds, we know that $\lambda$ is also $\Gamma(\theta,\sigma)$-representable at each $\eta\in \mathcal{X}_{\Gamma(\theta,\sigma)}(\mu)$. Given such an $\eta$, we can apply Theorem~\ref{theoremclosed} to conclude $\lambda$ is also $\Gamma((\cf\eta)^+,\sigma))$-representable at~$\eta$.
By the assumption of the lemma, we have
\begin{equation}
\mu = \sup\{\eta<\mu:\sigma\leq\cf(\eta)<(\cf(\mu))^+\text{ and }\lambda\in\PP_{\Gamma((\cf\mu)^+,\sigma)}(\eta)\},
\end{equation}
and hence $\lambda$ is $\Gamma(\sigma)$-representable at $\mu$
by way of Proposition~\ref{continuity}.
\end{proof}

The next result builds on this idea, and shows how assumptions on the structure of $\mathcal{X}_{\Gamma(\theta,\sigma)}(\mu)$ let us improve representations of cardinals in $\PP_{\Gamma(\theta,\sigma)}(\mu)$.

\begin{theorem}
\label{reduction}
Suppose $\mu$ is singular, and $\sigma$,$\tau$, $\chi$, and $\theta$ are regular cardinals satisfying
\begin{equation}
\sigma<\tau\leq\cf(\mu)<\chi\leq\theta<\mu.
\end{equation}
If
\begin{equation}
\mu = \sup\{\eta\in\mathcal{X}_{\Gamma(\theta,\sigma)}(\mu): \sup\{\cov(\rho,\rho,\sigma,2)^+:\sigma\leq\rho<\tau\}\leq\cf(\eta)<\chi\},
\end{equation}
then
\begin{equation}
\PP_{\Gamma(\theta,\sigma)}(\mu)=\PP_{\Gamma(\chi,\tau)}(\mu),
\end{equation}
hence
\begin{equation}
\pp_{\Gamma(\theta,\sigma)}(\mu)=\pp_{\Gamma(\chi,\tau)}(\mu)
\end{equation}
\end{theorem}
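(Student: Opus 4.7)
The plan is to prove the two inclusions separately. The direction $\PP_{\Gamma(\chi,\tau)}(\mu)\subseteq\PP_{\Gamma(\theta,\sigma)}(\mu)$ is immediate from the monotonicity property~(\ref{monotonicity}), since the chain $\sigma\leq\tau<\chi\leq\theta$ together with $\cf(\mu)\in[\tau,\chi)$ is exactly what is required.

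For the reverse inclusion, I would fix $\lambda\in\PP_{\Gamma(\theta,\sigma)}(\mu)$ and let $X^*$ denote the unbounded set of $\eta<\mu$ described in the hypothesis. Following the pattern of Proposition~\ref{4.3}, the strategy is to produce a $\Gamma(\chi,\tau)$-representation of $\lambda$ at each sufficiently large $\eta\in X^*$, then glue these together via Continuity (Proposition~\ref{continuity}) to obtain a $\Gamma(\chi,\tau)$-representation at $\mu$.  Restricting to those $\eta\in X^*$ with $\eta>\chi$ is a cofinal restriction and ensures each such $\eta$ satisfies $\tau\leq\cf(\eta)<\chi<\eta$, which is the shape needed to invoke continuity with its parameters $\sigma,\theta$ replaced by our $\tau,\chi$.

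Now fix such an $\eta$.  By Definition~\ref{Xdef}, $\eta$ is eventually $\Gamma(\theta,\sigma)$-closed and $\lambda\in\PP_{\Gamma(\theta,\sigma)}(\eta)$.  The cofinality constraint supplies two facts: first, $\cov(\rho,\rho,\sigma,2)<\cf(\eta)$ for every $\rho\in[\sigma,\tau)$; and second, $\tau\leq\cf(\eta)$ (because for each regular $\rho\in[\sigma,\tau)$ one has $\cov(\rho,\rho,\sigma,2)\geq\rho$, so the supremum in the hypothesis dominates $\rho^+$ for such $\rho$, and these $\rho^+$ are cofinal in $\tau$ for $\tau$ regular). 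These are precisely the hypotheses needed to apply Theorem~\ref{theoremgeneral} at $\eta$, where the roles of that theorem's $\tau$ and $\upsilon$ are played by our $\sigma$ and $\tau$, respectively. Theorem~\ref{theoremgeneral} then yields
\begin{equation*}
\PP_{\Gamma(\theta,\sigma)}(\eta)=\PP_{\Gamma(\tau)}(\eta)=\PP_{\Gamma((\cf(\eta))^+,\tau)}(\eta),
\end{equation*}
so $\lambda$ lies in the rightmost set.  One further application of~(\ref{monotonicity}), using $\tau<(\cf(\eta))^+\leq\chi$, upgrades this to $\lambda\in\PP_{\Gamma(\chi,\tau)}(\eta)$.

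Continuity then delivers $\lambda\in\PP_{\Gamma(\chi,\tau)}(\mu)$, finishing the proof of the equality of the $\PP$-sets and hence of the pseudopowers.  The main obstacle is essentially bookkeeping: Theorem~\ref{theoremgeneral}'s variables $\sigma,\tau,\upsilon$ collide notationally with our $\sigma,\tau,\chi$, so the parameter translation must be handled carefully, and one has to verify that the supremum in the hypothesis really does force $\tau\leq\cf(\eta)$ rather than just bound the individual covering numbers.  The underlying architecture — use the unboundedness encoded in $\mathcal{X}_{\Gamma(\theta,\sigma)}(\mu)$ to reflect the representation to smaller $\eta$, upgrade each such representation locally via Theorem~\ref{theoremgeneral}, then glue via continuity — is exactly the dichotomy-driven pattern this section has been developing.
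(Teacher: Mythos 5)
Your proposal is correct and follows essentially the same route as the paper: reflect $\lambda$ to the unbounded set of $\eta\in\mathcal{X}_{\Gamma(\theta,\sigma)}(\mu)$ supplied by the hypothesis, apply Theorem~\ref{theoremgeneral} at each such $\eta$ (with its $\tau,\upsilon$ instantiated as $\sigma,\tau$) to get a $\Gamma((\cf\eta)^+,\tau)$-representation, note $(\cf\eta)^+\leq\chi$, and glue via Continuity. Your explicit check that the supremum condition forces $\tau\leq\cf(\eta)$ is a detail the paper leaves implicit, but the argument is the same.
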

\begin{proof}
Clearly $\PP_{\Gamma(\chi,\tau)}(\mu)$ is contained in $\PP_{\Gamma(\theta,\sigma)}(\mu)$, so we will show the reverse inclusion holds.   To do this, suppose $\eta$ satisfies the following conditions:

\begin{itemize}
\item $\lambda$ is $\Gamma(\theta,\sigma)$-representable at $\eta$,
\sk
\item $\eta$ is eventually $\Gamma(\theta,\sigma)$-closed, and
\sk

\item $\sigma\leq\rho<\tau \Longrightarrow \cov(\rho,\rho,\sigma,2)<\tau\leq\cf(\eta)$.

\sk
\end{itemize}
An application of Theorem~\ref{theoremgeneral} tells us that $\lambda$ is $\Gamma((\cf\eta)^+,\tau)$-representable at $\eta$, and since $\cf(\eta)<\chi$ this means $\lambda$ is $\Gamma(\chi,\tau)$-representable at $\eta$.
We have assumed that the set of such $\eta$ is unbounded in $\mu$, so once again Proposition~\ref{continuity} forces $\lambda$ to be $\Gamma(\chi,\tau)$-representable at~$\mu$ as well.
\end{proof}

\section{Applications of the Pseudopower Dichotomy}

\subsection{Computing $\PP_\theta(\mu)$} In this section, we put together pieces from our preceding work to obtain theorems in {\sf ZFC} based on the Pseudopower Dichotomy.  Our first result echoes Theorem~\ref{371}, as tells us that $\PP_\theta(\mu)$ can be computed from pseudopowers that involve the cofinality of $\mu$ in two different ways.

Since $\PP(\mu)$ is defined as $\PP_{\cf(\mu)}(\mu)$, it is clear that
\begin{equation}
\label{5.1a}
\PP(\mu)\cup \PP_{\Gamma(\theta^+,\cf(\mu))}\subseteq\PP_{\theta}(\mu).
\end{equation}
The first theorem in this section shows that the two sides of (\ref{5.1a}) are actually equal.

\begin{theorem}
\label{theorem7}
Suppose $\mu$ is singular, and $\cf(\theta)\leq\theta<\mu$.
Then
\begin{equation}
\label{eqnhuh}
\PP_\theta(\mu) = \PP(\mu)\cup \PP_{\Gamma(\theta,\cf(\mu))}(\mu).
\end{equation}
hence
\begin{equation}
\pp_\theta(\mu)=\pp(\mu)+\pp_{\Gamma(\theta^+,\cf(\mu))}(\mu).
\end{equation}
\end{theorem}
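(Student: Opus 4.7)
The containment $\PP(\mu)\cup\PP_{\Gamma(\theta^+,\cf(\mu))}(\mu)\subseteq\PP_\theta(\mu)$ is~\eqref{5.1a}; the case $\cf(\mu)=\aleph_0$ is trivial since both sides then collapse to $\PP_\theta(\mu)$, so I assume $\cf(\mu)>\aleph_0$, fix $\lambda\in\PP_\theta(\mu)$, and apply the Pseudopower Dichotomy with the pair $(\theta^+,\aleph_0)$.

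If $\mu$ is eventually $\Gamma(\theta^+,\aleph_0)$-closed (Option~1), Corollary~\ref{3.6}---Shelah's classical upgrade theorem---immediately gives $\PP_\theta(\mu)=\PP(\mu)$, so $\lambda\in\PP(\mu)$ and we are done. Otherwise $\mathcal{X}_{\Gamma(\theta^+,\aleph_0)}(\mu)$ is unbounded in $\mu$; for each $\eta$ in this class, $\eta$ is itself eventually $\Gamma(\theta^+,\aleph_0)$-closed and $\lambda\in\PP_\theta(\eta)$, so Corollary~\ref{3.6} applied at $\eta$ (with Theorem~\ref{theoremclosed} covering the countable-cofinality boundary case) produces a concrete representation $\lambda=\tcf(\prod A_\eta/J^{\bd}[A_\eta])$ with $|A_\eta|=\cf(\eta)<\theta^+$.

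I then split on the cofinalities realized in $\mathcal{X}_{\Gamma(\theta^+,\aleph_0)}(\mu)$. If unboundedly many such $\eta$ satisfy $\cf(\eta)\geq\cf(\mu)$, the ideal $J^{\bd}[A_\eta]$ is $\cf(\eta)$-complete, hence $\cf(\mu)$-complete, and $|A_\eta|<\theta^+$; Continuity (Proposition~\ref{continuity}) applied with the pair $(\theta^+,\cf(\mu))$ lands $\lambda\in\PP_{\Gamma(\theta^+,\cf(\mu))}(\mu)$. In the complementary sub-case, cofinitely many $\eta$ satisfy $\cf(\eta)<\cf(\mu)$, so Proposition~\ref{4.3} applies and gives $\PP_\theta(\mu)=\PP_{\cf(\mu)}(\mu)$; I then re-apply the Pseudopower Dichotomy, now with the pair $((\cf\mu)^+,\aleph_0)$. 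Its Option~1 combined with Corollary~\ref{3.6} (taking $\theta=\cf(\mu)$) yields $\lambda\in\PP(\mu)$, while its Option~2 supplies unboundedly many $\eta<\mu$ with $\lambda\in\PP(\eta)$: either a cofinal subfamily has $\cf(\eta)=\cf(\mu)$ and Continuity with $((\cf\mu)^+,\cf(\mu))$ again lands $\lambda\in\PP(\mu)$, or one pigeonholes to a fixed regular $\rho<\cf(\mu)$ with unboundedly many $\eta$ of cofinality $\rho$ and pastes the $\lambda$-scales on $\prod A_\eta/J^{\bd}[A_\eta]$ into a single $\lambda$-scale on $\prod A/J^{\bd}[A]$, where $A=\bigcup_\xi A_{\eta_\xi}$ is cofinal in $\mu\cap\reg$ of cardinality $\cf(\mu)$, placing $\lambda$ in $\PP(\mu)$.

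The hard step I anticipate is the very last paste-together, where Continuity cannot be invoked with $\sigma=\cf(\mu)$ since each $\cf(\eta)<\cf(\mu)$; one must either iterate the dichotomy further or perform a direct scale-gluing on the bounded pieces, leveraging that each $A_\eta$ is a generator for $\lambda$ modulo its bounded ideal. Everything else is a mechanical orchestration of Continuity, Inverse Monotonicity (Proposition~\ref{inversemonotonicity}), and the closure results of Section~3.
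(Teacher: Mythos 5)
Your overall architecture is the paper's: reduce to uncountable $\cf(\mu)$, apply the Pseudopower Dichotomy for $\Gamma(\theta^+,\aleph_0)$, dispose of Option~1 via Corollary~\ref{3.6}, and in Option~2 split on whether the cofinalities realized cofinally in $\mathcal{X}_{\Gamma(\theta^+,\aleph_0)}(\mu)$ are large or small relative to $\cf(\mu)$. Your treatment of the large-cofinality sub-case is a genuine (and valid) variant: where the paper invokes Theorem~\ref{reduction} (hence Theorem~\ref{theoremgeneral} and its covering-number hypotheses, trivially satisfied for $\sigma=\aleph_0$), you instead apply Corollary~\ref{3.6} at each eventually closed $\eta\in\mathcal{X}$ with $\cf(\eta)\geq\cf(\mu)$ to get $\lambda=\tcf(\prod A_\eta/J^{\bd}[A_\eta])$, observe that $J^{\bd}[A_\eta]$ is automatically $\cf(\eta)$-complete and hence $\cf(\mu)$-complete, and finish with Continuity. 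This is more elementary for the case $\sigma=\aleph_0$, since it exploits the bounded-ideal representations that only Corollary~\ref{3.6} provides; the paper's route through Theorem~\ref{reduction} is the one that generalizes to uncountable $\sigma$ (Theorem~\ref{theorem8}), where no bounded-ideal upgrade is available.

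The last third of your argument, however, is chasing a phantom. Once Proposition~\ref{4.3} gives $\PP_\theta(\mu)=\PP_{\Gamma(\aleph_0)}(\mu)=\PP_{\cf(\mu)}(\mu)$ in the small-cofinality sub-case, you are done: by the paper's definitions, $\PP(\mu)$ \emph{is} $\PP_{\cf(\mu)}(\mu)=\PP_{\Gamma((\cf\mu)^+,\aleph_0)}(\mu)$ --- membership in $\PP(\mu)$ requires only a representation on a set of size $\cf(\mu)$ with \emph{some} ideal extending the bounded ideal, not the bounded ideal itself. So the second application of the dichotomy, the pigeonholing to a fixed $\rho<\cf(\mu)$, and the ``hard step'' of gluing scales into a single $\lambda$-scale on $\prod A/J^{\bd}[A]$ are all unnecessary. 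Worse, that final gluing as described would not go through: the union of the $A_{\eta_\xi}$ modulo \emph{its} bounded ideal has no reason to have true cofinality $\lambda$ (Continuity produces an ideal on the union that contains each piece's ideal, which is far larger than $J^{\bd}[A]$), and obtaining bounded-ideal representations for every $\lambda\in\PP_\theta(\mu)$ is strictly stronger than what the theorem asserts. Delete that portion and the proof is complete and correct.
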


We make a couple of comments before presenting the proof. First, note that the above theorem shows us that if $\lambda$ has a representation at $\mu$ using a set of size~$\theta$, then $\lambda$
can be represented at~$\mu$ using either a set of cardinality~$\cf(\mu)$ (the minimum possible size) or a $\cf(\mu)$-complete ideal (the maximum possible). These options are not mutually exclusive (for example, both hold simultaneously for $\mu^+$), but the power of the theorem is in the statement that at least of
these two things {\bf\em must} occur.

Our second comment is to note that because $\PP(\mu)$ and $\PP_{\Gamma(\theta,\cf(\mu))}(\mu)$ are both intervals of regular cardinals, the equation (\ref{eqnhuh}) implies that $\PP_\theta(\mu)$ must in fact be equal to one of these two sets.

\begin{proof}[Proof of Theorem~\ref{theorem7}]
We may assume that $\mu$ has uncountable cofinality, as otherwise (\ref{eqnhuh}) holds automatically. We apply the pseudopower dichotomy to $\Gamma(\theta^+,\aleph_0)$.   If $\mu$ is eventually $\Gamma(\theta^+,\aleph_0)$-closed, then Corollary~\ref{3.6} gives us more than we need because any $\lambda$ in $\PP_\theta(\mu)$ can be represented using the bounded ideal on a set of cardinality~$\cf(\mu)$ cofinal in $\mu\cap\reg$.  It follows that in this situation, all three of the sets from
(\ref{eqnhuh}) are equal, and
\begin{equation}
\PP_{\Gamma(\theta^+,\aleph_0)}(\mu)=\PP_{\Gamma(\cf(\mu))}(\mu)\subseteq\PP(\mu)=\PP_{\Gamma(\theta^+,\cf(\mu))}(\mu),
\end{equation}
which is more than we require.

Thus, we may assume that Option~2 of the Pseudopower Dichotomy is in force and the corresponding set~$\mathcal{X}_{\Gamma(\theta^+,\aleph_0)}$ is unbounded in~$\mu$. Abbreviating this set as ``$\mathcal{X}$'', we split into two cases.

If the set of $\eta\in \mathcal{X}$ with $\cf(\eta)\leq\cf(\mu)$ is unbounded in~$\mu$, then Proposition~\ref{4.3} tells us
\begin{equation}
\PP_{\Gamma(\theta^+,\aleph_0)}(\mu)=\PP(\mu),
\end{equation}
and (\ref{eqnhuh}) is immediate.

If  $\{\eta\in\mathcal{X}: \cf(\eta)>\cf(\mu)\}$ is unbounded in $\mu$, then we apply Theorem~\ref{reduction} with $\chi = \theta^+$, $\upsilon=\cf(\mu)$, and $\sigma$ and $\tau$ both equal to~$\aleph_0$ to conclude
\begin{equation}
\PP_\theta(\mu)=\PP_{\Gamma(\theta^+,\cf(\mu))}(\mu),
\end{equation}
which establishes (\ref{eqnhuh}).  Note that Theorem~\ref{reduction} does apply here: since $\sigma=\aleph_0$, we know $\cov(\rho,\rho,\sigma, 2) = \rho$ for any $\rho<\cf(\mu)$.

Clearly at least one of these cases must happen, and therefore (\ref{eqnhuh}) holds under Option~2 of the Pseudopower Dichotomy, finishing the proof.
\end{proof}

Reformulating the preceding result in less technical language, we have shows that given a singular cardinal $\mu$, if a cardinal $\lambda$ is representable at $\mu$ using a set of cardinality at most $\theta<\mu$, then either $\lambda$ is representable at $\mu$ using a set of cardinality $\cf(\mu)$, or $\lambda$ can be represented at $\mu$ using a~$\cf(\mu)$-complete ideal on a set of cardinality at most~$\theta$.

\subsection{Computing $\PP_{\Gamma(\theta,\sigma)}(\mu)$} The next theorem examines the more general situation where $\sigma$ may be uncountable. The conclusion is weaker than that of Theorem~\ref{theorem7}, but the fact that $\sigma$ is uncountable will allow us  to transfer the result into a corresponding statement about covering numbers.

\begin{theorem}
\label{theorem8}
Suppose $\mu$ is singular, and let $\sigma$ and $\theta$ be regular cardinals with $\sigma<\cf(\mu)<\theta<\mu$. Then
\begin{equation}
\label{eqnbleh}
\PP_{\Gamma(\theta,\sigma)}(\mu)= \PP_{\Gamma(\sigma)}(\mu)\cup \PP_{\Gamma(\theta,\tau)}(\mu)
\end{equation}
for any regular $\tau\in (\sigma,\cf(\mu)]$ such that
\begin{equation}
\label{5.9}
\sigma\leq\rho<\tau\Longrightarrow \cov(\rho,\rho,\sigma, 2)<\cf(\mu).
\end{equation}
In particular, for such $\tau$ we have
\begin{equation}
\pp_{\Gamma(\theta,\sigma)}(\mu)=\pp_{\Gamma(\sigma)}(\mu) + \pp_{\Gamma(\theta,\tau)}(\mu).
\end{equation}
\end{theorem}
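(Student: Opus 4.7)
The plan is to mirror the structure of the proof of Theorem~\ref{theorem7}, applying the {\sf Pseudopower Dichotomy} to $\Gamma(\theta,\sigma)$ and reducing each branch to one of the tools developed in Sections~3 and~4.  The inclusion $\PP_{\Gamma(\sigma)}(\mu)\cup\PP_{\Gamma(\theta,\tau)}(\mu)\subseteq\PP_{\Gamma(\theta,\sigma)}(\mu)$ is immediate from monotonicity (\ref{monotonicity}), using $(\cf\mu)^+\leq\theta$ on the first summand and $\sigma\leq\tau$ on the second, so the work lies entirely in the reverse containment. Showing this containment amounts to proving that in every branch of the dichotomy, $\PP_{\Gamma(\theta,\sigma)}(\mu)$ is actually equal to one of the two summands.

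First suppose we are in Option~1 of the dichotomy, i.e.\ $\mu$ is eventually $\Gamma(\theta,\sigma)$-closed.  Then Theorem~\ref{theoremgeneral} applies directly: take its parameter $\tau$ to be our $\sigma$ and its parameter $\upsilon$ to be our $\tau$, so that its hypothesis (\ref{eqn10}) becomes precisely our assumption (\ref{5.9}).  The conclusion is $\PP_{\Gamma(\theta,\sigma)}(\mu)=\PP_{\Gamma(\tau)}(\mu)$, and since $\PP_{\Gamma(\tau)}(\mu)=\PP_{\Gamma((\cf\mu)^+,\tau)}(\mu)\subseteq\PP_{\Gamma(\theta,\tau)}(\mu)$ by monotonicity, this branch is handled.

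Now suppose we are in Option~2, so $\mathcal{X}:=\mathcal{X}_{\Gamma(\theta,\sigma)}(\mu)$ is unbounded in~$\mu$.  Split into two subcases.  If the set $\{\eta\in\mathcal{X}:\cf(\eta)\leq\cf(\mu)\}$ is unbounded in~$\mu$, then Proposition~\ref{4.3} gives $\PP_{\Gamma(\theta,\sigma)}(\mu)=\PP_{\Gamma(\sigma)}(\mu)$.  Otherwise, $\{\eta\in\mathcal{X}:\cf(\eta)>\cf(\mu)\}$ is unbounded in~$\mu$, and we apply Theorem~\ref{reduction} with $\chi=\theta$ and the given~$\tau$.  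Here the key computation is that our hypothesis (\ref{5.9}) forces
\begin{equation*}
\sup\{\cov(\rho,\rho,\sigma,2)^+:\sigma\leq\rho<\tau\}\leq\cf(\mu),
\end{equation*}
using the regularity of $\cf(\mu)$ so that the bound $\cov(\rho,\rho,\sigma,2)<\cf(\mu)$ upgrades to $\cov(\rho,\rho,\sigma,2)^+\leq\cf(\mu)$.  Every $\eta\in\mathcal{X}$ with $\cf(\eta)>\cf(\mu)$ therefore satisfies the cofinality constraint of Theorem~\ref{reduction} (the upper bound $\cf(\eta)<\theta$ being built into membership in~$\mathcal{X}$), so the hypothesis is met and we conclude $\PP_{\Gamma(\theta,\sigma)}(\mu)=\PP_{\Gamma(\theta,\tau)}(\mu)$.

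In each of the three branches the set $\PP_{\Gamma(\theta,\sigma)}(\mu)$ is equal to one of the two summands in (\ref{eqnbleh}), which establishes the desired equality.  The resulting identity for $\pp_{\Gamma(\theta,\sigma)}(\mu)$ then follows by taking suprema on both sides.  The only real obstacle is verifying that the parameter bounds of Theorem~\ref{reduction} are satisfied in subcase~(b); this is exactly where the covering hypothesis (\ref{5.9}) is used, and it is also the reason the theorem must be stated with a restriction on~$\tau$ rather than for arbitrary $\tau\in(\sigma,\cf(\mu)]$.
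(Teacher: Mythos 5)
Your proposal is correct and takes essentially the same route as the paper: apply the {\sf Pseudopower Dichotomy} to $\Gamma(\theta,\sigma)$, handle Option~1 via Theorem~\ref{theoremgeneral} (the paper simply asserts ``all three sets are equal'' here, which is exactly your instantiation with its $\tau,\upsilon$ set to $\sigma,\tau$), and split Option~2 into the two cofinality subcases resolved by Proposition~\ref{4.3} and Theorem~\ref{reduction} respectively. The only difference is that you spell out the parameter bookkeeping (in particular the verification that (\ref{5.9}) yields the hypothesis of Theorem~\ref{reduction}) that the paper leaves implicit by deferring to the proof of Theorem~\ref{theorem7}.
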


\begin{proof}
The proof mirrors that of Theorem~\ref{theorem7}.  If $\mu$ is eventually $\Gamma(\theta,\sigma)$-closed, then all three sets mentioned in~(\ref{eqnbleh}) are equal, and the result follows.  Thus, we may assume that Option 2 of the Pseudopower Dichotomy is in force, and the set $\mathcal{X}=\mathcal{X}_{\Gamma(\theta,\sigma)}(\mu)$ from Definition~\ref{Xdef} is unbounded in~$\mu$.  We break into cases just as in Theorem~\ref{theorem7}. If the set of $\eta$ in $\mathcal{X}$ with $\cf(\eta)\leq\cf(\mu)$ is unbounded, then
\begin{equation}
\PP_{\Gamma(\theta,\sigma)}(\mu)=\PP_{\Gamma(\sigma)}(\mu)
\end{equation}
by way of Proposition~\ref{4.3}.

If the set of $\eta\in\mathcal{X}$ with $\cf(\mu)<\cf(\eta)$ is unbounded in~$\mu$, then by Theorem~\ref{reduction} we will have
\begin{equation}
\PP_{\Gamma(\theta,\sigma)}(\mu)=\PP_{\Gamma(\theta,\tau)}(\mu).
\end{equation}
Since at least one of these two things must occur under Option 2, the result follows.
\end{proof}

Note that condition (\ref{5.9}) holds if $\sigma\leq\tau<\sigma^{+\omega}$, so we are able to conclude the following:

\begin{corollary}
\label{corcor}
Suppose $\mu$ is singular, and $\sigma$ and $\theta$ are regular cardinals such that $$\sigma^{+n}\leq\cf(\mu)<\theta<\mu.$$
Then
\begin{equation}
\PP_{\Gamma(\theta,\sigma)}(\mu)=\PP_{\Gamma(\sigma)}(\mu)\cup\PP_{\Gamma(\theta,\sigma^{+n})}(\mu),
\end{equation}
and thus
\begin{equation}
\pp_{\Gamma(\theta,\sigma)}(\mu) = \pp_{\Gamma(\sigma)}(\mu) + \pp_{\Gamma(\theta,\sigma^{+n})}(\mu).
\end{equation}
In particular, if $\sigma<\cf(\mu)$, then
\begin{equation}
\label{5.2}
\pp_{\Gamma(\theta,\sigma)}(\mu)=\pp_{\Gamma(\sigma)}(\mu) + \pp_{\Gamma(\theta,\sigma^+)}(\mu).
\end{equation}
\end{corollary}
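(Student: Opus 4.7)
The plan is to invoke Theorem~\ref{theorem8} directly with $\tau = \sigma^{+n}$. Assuming $n \geq 1$ (the case $n = 0$ is trivial as the claimed identity becomes a tautology), we have $\sigma < \sigma^{+n} \leq \cf(\mu)$, so $\tau$ lies in the interval $(\sigma, \cf(\mu)]$ as required by Theorem~\ref{theorem8}. The only content in the argument is to verify the covering hypothesis~(\ref{5.9}), namely that $\cov(\rho, \rho, \sigma, 2) < \cf(\mu)$ for every cardinal $\rho$ with $\sigma \leq \rho < \sigma^{+n}$.

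To verify~(\ref{5.9}), I would observe that since $\sigma$ is regular and $n$ is finite, every cardinal in the interval $[\sigma, \sigma^{+n})$ is of the form $\sigma^{+k}$ for some $k < n$, and hence is itself regular; there are no singular cardinals in this range to worry about. For any regular cardinal $\rho$ with $\sigma \leq \rho < \sigma^{+n}$, every member of $[\rho]^{<\sigma}$ is bounded in $\rho$ (since $\sigma \leq \rho = \cf(\rho)$), and so the family of initial segments $\{\alpha : \alpha < \rho\}$ is a cofinal subfamily of $([\rho]^{<\sigma}, \subseteq)$ lying in $[\rho]^{<\rho}$. This witnesses $\cov(\rho, \rho, \sigma, 2) \leq \rho < \sigma^{+n} \leq \cf(\mu)$, so~(\ref{5.9}) holds.

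Theorem~\ref{theorem8} then yields
\begin{equation*}
\PP_{\Gamma(\theta,\sigma)}(\mu) = \PP_{\Gamma(\sigma)}(\mu) \cup \PP_{\Gamma(\theta,\sigma^{+n})}(\mu),
\end{equation*}
and the corresponding pseudopower equation follows by taking suprema. The ``in particular'' clause is just the special case $n = 1$: the hypothesis $\sigma < \cf(\mu)$ is equivalent to $\sigma^+ \leq \cf(\mu)$, so the general identity specializes to $\pp_{\Gamma(\theta,\sigma)}(\mu) = \pp_{\Gamma(\sigma)}(\mu) + \pp_{\Gamma(\theta,\sigma^+)}(\mu)$. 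There is no substantive obstacle here; the only nontrivial step is recognizing that the interval $[\sigma, \sigma^{+n})$ contains only regular cardinals, which makes the relevant covering numbers small by an elementary bounded\-ness argument rather than by any deeper pcf input.
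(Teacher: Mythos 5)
Your proposal is correct and is essentially the paper's own argument: the corollary is obtained by applying Theorem~\ref{theorem8} with $\tau=\sigma^{+n}$, the paper justifying hypothesis~(\ref{5.9}) with the one-line remark that it holds whenever $\sigma\leq\tau<\sigma^{+\omega}$. Your verification that every $\rho\in[\sigma,\sigma^{+n})$ is regular, so that initial segments witness $\cov(\rho,\rho,\sigma,2)\leq\rho<\cf(\mu)$, is exactly the elementary observation the paper leaves implicit.
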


This is similar in spirit to results we saw earlier: if we know a cardinal is $\Gamma(\theta,\sigma)$-representable at~$\mu$ with $\sigma<\cf(\mu)$, then either it is representable using a $\sigma$-complete ideal on a set of cardinality~$\cf(\mu)$ (so the size is as small as
possible), or it is representable on a set of cardinality less than $\theta$, but using a $\sigma^+$-complete ideal (so we are able to find a representation with greater completeness). The equation (\ref{5.2}) will be important in the next section, in which we use work of Gitik to show that both terms appearing on the right-hand side of the equality are needed.

\subsection{A theorem of Gitik and Shelah} For our final application, we show that the pseudopower dichotomy underlies a result of Gitik and Shelah (Theorem~1.5 of~\cite{412}) on cardinal arithmetic. They phrase their result in terms of covering numbers of the form $\cov(\mu,\kappa,\theta,\aleph_1)$, but their proof is basically as given below.
\begin{theorem}[Gitik-Shelah~\cite{412}]
\label{gitikshelah}
Suppose $\sigma\leq\cf(\mu)<\mu$.  Then
\begin{equation}
\{\pp_{\Gamma(\theta,\sigma)}(\mu):\cf(\mu)<\theta=\cf(\theta)<\mu\}\text{ is finite.}
\end{equation}
\end{theorem}

As noted by Gitik and Shelah, this result can be viewed as ``non-GCH analog" of a theorem of Hajnal and Shelah that $\{\mu^\theta: 2^\theta<\mu\}$ is finite.\footnote{One reference for this result given in~\cite{412} is incorrect: it appears as a Exercise 5 in Section 11 of~\cite{HH}.  Shelah noted it independently on pages 164 and 165 of~\cite{232}.}

\begin{proof}
Suppose this fails for $\mu$, and choose an increasing sequence $\langle\theta_n:n<\omega\rangle$ such that for each $n<\omega$,
\begin{equation}
\cf(\mu)<\theta_n=\cf(\theta_n)<\mu,
\end{equation}
and
\begin{equation}
\pp_{\Gamma(\theta_n,\sigma)}(\mu)<\pp_{\Gamma(\theta_{n+1},\sigma)}(\mu).
\end{equation}
Since
\begin{equation}
\pp_{\Gamma(\sigma)}(\mu)\leq\pp_{\Gamma(\theta,\sigma)}(\mu)
\end{equation}
for any regular $\theta$ in the interval $(\cf(\mu),\mu)$, we can assume that
\begin{equation}
\pp_{\Gamma(\sigma)}(\mu)<\pp_{\Gamma(\theta_n,\sigma)}(\mu)
\end{equation}
as well.  By Corollary~\ref{useful}, we know for each $n<\omega$ that $\mu$ is a limit of eventually $\Gamma(\theta_n,\sigma)$-closed cardinals $\eta$  with
\begin{equation}
\pp_{\Gamma(\theta_n, \sigma)}(\eta)=\pp_{\Gamma(\theta_n, \sigma)}(\mu).
\end{equation}
For each $n$, let $\eta_n$ be the least such cardinal.  The sequence $\langle\eta_n:n<\omega\rangle$ is non-increasing, hence eventually constant, say with value $\eta$.  This particular $\eta$ will be eventually $\Gamma(\theta_n,\sigma)$-closed for each $n<\omega$, and hence
\begin{equation}
\pp_{\Gamma(\theta_n, \sigma)}(\eta) = \pp_{\Gamma(\sigma)}(\eta)
\end{equation}
by Theorem~\ref{theoremclosed}.  But then for any $n$, we have
\begin{equation}
\pp_{\Gamma(\theta_n,\sigma)}(\mu) = \pp_{\Gamma(\theta_n,\sigma)}(\eta)=\pp_{\Gamma(\sigma)}(\eta)=\pp_{\Gamma(\theta_{n+1},\sigma)}(\eta)=\pp_{\Gamma(\theta_n,\sigma)}(\mu),
\end{equation}
which contradicts our choice of the sequence $\langle\theta_n:n<\omega\rangle$.
\end{proof}

This result will reappear in the final section of our paper, when we formulate some natural open questions.

\section{Independence Results}

\subsection{Gitik's theorem} This short section will focus on obtaining independence results complementary to the theorems we established in the previous section. Specifically, we will examine the formula
\begin{equation}
\label{6.1}
\pp_{\Gamma(\theta,\sigma)}(\mu)=\pp_{\Gamma(\sigma)}(\mu)+\pp_{\Gamma({\theta,\sigma^+})}(\mu)
\end{equation}
established in Corollary~\ref{corcor}. We rely almost completely on recently published work of Moti Gitik~\cite{gitik}. Since we do not have the expertise to discuss his proof in detail, our approach will be to quote his results liberally. We start with his main theorem:

\begin{theorem}[Theorem 1.3 of~\cite{gitik}]
\label{gitikthm} Assume GCH.  Let $\eta$ be an ordinal and $\delta$ be a regular cardinal.  Let $\langle
\kappa_\alpha:\alpha<\eta\rangle$ be an increasing sequence of strong cardinals, and let $\lambda$ be a cardinal greater than the
supremum of $\{\kappa_\alpha:\alpha<\eta\}$.  Then there is a cardinal preserving extension in which, for every $\alpha<\eta$,
\begin{enumerate}
\item $\cf(\kappa_\alpha)=\delta$, and \sk
\item $\pp(\kappa_\alpha)\geq\lambda$.
\end{enumerate}
\end{theorem}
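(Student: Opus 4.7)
The plan is to adapt Gitik--Magidor extender-based Prikry forcing (or Magidor's variant when $\delta>\omega$), iterated in a controlled way across the $\kappa_\alpha$'s. Under GCH, each strong cardinal $\kappa_\alpha$ carries $(\kappa_\alpha,\mu)$-extenders for every $\mu<\lambda$, and such a long extender is exactly the raw material needed both for a Prikry-type forcing that collapses the cofinality of $\kappa_\alpha$ to $\delta$ and, in the extension, for a scale of length $\lambda$ in a reduced product of regular cardinals cofinal in $\kappa_\alpha$, which drives $\pp(\kappa_\alpha)\geq\lambda$.

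First I would do a Laver-style (or universal Laver) preparation below $\kappa_0$ arranging that the $\lambda$-strongness of each $\kappa_\alpha$ is indestructible under the forcings to be used subsequently, and fixing the bookkeeping so that GCH can be restored at the relevant intermediate stages and master conditions are available for lifting the embeddings derived from the $E_\alpha$.

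Next, the main forcing $\mathbb{P}$ is an Easton-support iteration $\langle\mathbb{P}_\beta,\dot{\mathbb{Q}}_\beta:\beta<\eta\rangle$ in which $\dot{\mathbb{Q}}_\beta$ is the extender-based Prikry/Magidor forcing at $\kappa_\beta$, defined in $V^{\mathbb{P}_\beta}$ from a lift of $E_\beta$ of length at least $\lambda$. The three items to verify are: cardinal preservation (each $\dot{\mathbb{Q}}_\beta$ has the Prikry property, adds no bounded subsets of $\kappa_\beta$, and the Easton support together with chain-condition bounds keep cardinals); fulfillment of clause (1), immediate from the generic cofinal sequence of length $\delta$; and fulfillment of clause (2), via the lower bound $\pp(\kappa_\alpha)\geq\lambda$ obtained by reading off a scale of length $\lambda$ from the generic together with the extender projections, using the fact that the $\kappa_\alpha$-many regulars generated in this way are cofinal in $\kappa_\alpha$ in the extension.

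The main obstacle will be the second verification, namely ensuring that each $E_\alpha$ lifts through the earlier Prikry/Magidor stages $\mathbb{P}_\alpha$: these stages destroy almost every large-cardinal property at cardinals below $\kappa_\alpha$, and the lifts cannot plausibly be arranged stage-by-stage in isolation. Gitik's actual argument almost certainly handles this by working with a single coherent extender system on the block $\langle\kappa_\alpha:\alpha<\eta\rangle$ and producing one uniform master-condition argument that simultaneously lifts every embedding $j_\alpha$ through the full iteration, rather than with $\eta$ independent liftings. Reconstructing this coherence is the delicate part of the construction; once it is in place, the cardinal-preservation argument and the pseudopower computation follow from the now-standard technology of extender-based Prikry forcing.
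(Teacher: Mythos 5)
This statement is not proved in the paper at all: it is quoted verbatim as Theorem~1.3 of Gitik's paper on extender-based forcings with overlapping extenders, and the author explicitly declines to discuss the proof, using the result as a black box. So there is no internal argument to compare yours against; the only fair comparison is with Gitik's actual construction, and on that score your proposal has a genuine, self-acknowledged gap at its core.

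The gap is exactly the one you name in your last paragraph and then do not close. You propose an Easton-support iteration of separate extender-based Prikry/Magidor forcings, one at each $\kappa_\beta$, and then observe that the embeddings $j_\alpha$ derived from the extenders at $\kappa_\alpha$ cannot plausibly be lifted stage-by-stage through the earlier Prikry stages, because those stages destroy the large-cardinal structure below $\kappa_\alpha$. Saying that Gitik ``almost certainly'' resolves this with a coherent extender system, and that reconstructing the coherence ``is the delicate part,'' is an accurate diagnosis of where the entire difficulty lives, but it is not a proof: the theorem is hard precisely because the naive iteration does not work, and Gitik's contribution (signalled in the very title of his paper) is a \emph{single} forcing notion built from overlapping extenders on the whole block $\langle\kappa_\alpha:\alpha<\eta\rangle$, with a Prikry property and cardinal-preservation argument proved for that one forcing, not an iteration with $\eta$ independent liftings. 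Two further points deserve caution even in a sketch: Laver-style indestructibility is a supercompactness technique, and arranging that $\lambda$-strongness survives subsequent Prikry-type forcing is itself delicate (the known preparations for strong cardinals, due to Gitik--Shelah and Hamkins, are tailored to restricted classes of forcing); and for clause (2) you need not merely ``a scale of length $\lambda$'' but a true cofinality $\geq\lambda$ of a product of regulars cofinal in $\kappa_\alpha$ modulo an ideal on a set of size $\cf(\kappa_\alpha)=\delta$, which is exactly what the long-extender machinery provides but which must be verified in the final model, after all later stages of your proposed iteration have acted. As it stands, your write-up is a correct identification of the shape of the problem rather than an argument that solves it.
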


The final section of his paper (Section 8) is the part most relevant for us, as he examines the cardinal arithmetic structure of
his model in some detail. We will follow his notation, and point out what need.

\subsection{Computations} We assume that $\delta$ and $\eta$ are regular cardinals with $\aleph_2\leq\delta$ and $\delta^+<\eta$, and we work in the
generic extension $V[G]$ from Theorem~\ref{gitikthm}. In $V[G]$, the cardinals $\kappa_\alpha$ for $\alpha<\eta$ will all have
cofinality $\delta$, and each satisfies $\pp(\kappa_\alpha)\geq\lambda$.   We need a little more: as Gitik notes prior to his
Proposition~8.11, in fact we have the stronger result that
\begin{equation}
\pp_{\Gamma(\delta)}(\kappa_\alpha)\geq\lambda
\end{equation}
as all of the ideals involved in the computation are $\delta$-complete.

For each $\alpha<\eta$, we let
\begin{equation}
\bar{\kappa}_\alpha = \sup\{\kappa_\beta:\beta<\alpha\}.
\end{equation}
In $V[G]$, if $\alpha<\eta$ is a limit ordinal, then $\bar{\kappa}_\alpha$ is singular with cofinality less than $\eta$. Our
choices of $\delta$ and $\eta$ guarantee that there are limit ordinals $\alpha<\eta$ with $\cf(\alpha)$ greater than $\delta$,
and others of cofinality less than~$\delta$ but greater than $\omega_1$.  The corresponding $\bar{\kappa}_\alpha$ for these two
sorts of $\alpha$ are the places of interest to us, and we analyze each situation separately.  Again, relying on Gitik's work we
have:

\begin{proposition}
Let $\alpha$ be a limit ordinal less than $\eta$, and let $V[G]$ be as in Theorem~\ref{gitikthm}.  Then the model $V[G]$
satisfies:
\begin{enumerate}
\item If $\cf(\alpha)<\delta$ then
\begin{equation}
\label{6.3}
    \pp(\bar{\kappa}_\alpha)<\pp_{\Gamma(\delta^+,\cf(\alpha))}(\bar{\kappa}_\alpha).
    \end{equation}

\item If $\delta<\cf(\alpha)$ then
\begin{equation}
\label{6.4}
\pp_{\Gamma(\delta^+)}(\bar{\kappa}_\alpha)<\pp_{\Gamma(\delta)}(\bar{\kappa}_\alpha).
\end{equation}
\end{enumerate}
\end{proposition}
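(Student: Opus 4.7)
My plan is to sandwich each of (\ref{6.3}) and (\ref{6.4}) by establishing the right-hand side pseudopower is at least $\lambda$ using Proposition~\ref{continuity}, while importing the strict upper bound on the left-hand side from Gitik's cardinal arithmetic analysis in Section~8 of~\cite{gitik}.

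For the lower bounds I would lean on the fact, already recorded above, that in $V[G]$ we have $\pp_{\Gamma(\delta)}(\kappa_\beta) \geq \lambda$ for every $\beta < \eta$, so every regular $\lambda^\ast$ with $\kappa_\beta^+ \leq \lambda^\ast \leq \lambda$ is $\Gamma(\delta^+, \delta)$-representable at $\kappa_\beta$. For case (1), where $\cf(\alpha) < \delta$, the monotonicity (\ref{monotonicity}) immediately converts this into $\Gamma(\delta^+, \cf(\alpha))$-representability at each $\kappa_\beta$, and the side conditions $\cf(\alpha) \leq \cf(\kappa_\beta) = \delta < \delta^+ < \kappa_\beta$ for Continuity hold for every sufficiently large $\beta < \alpha$; since $\bar{\kappa}_\alpha = \sup_{\beta < \alpha} \kappa_\beta$, Proposition~\ref{continuity} transfers the representability to $\bar{\kappa}_\alpha$, and we conclude $\pp_{\Gamma(\delta^+, \cf(\alpha))}(\bar{\kappa}_\alpha) \geq \lambda$. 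For case (2), where $\cf(\alpha) > \delta$, the same monotonicity yields $\Gamma((\cf\alpha)^+, \delta)$-representability at each $\kappa_\beta$, and the corresponding Continuity hypothesis $\delta < (\cf\alpha)^+ < \kappa_\beta$ is eventually met since $\cf(\alpha) < \eta$ and the $\kappa_\beta$ are cofinal in $\bar{\kappa}_\alpha$; hence $\pp_{\Gamma(\delta)}(\bar{\kappa}_\alpha) \geq \lambda$.

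For the upper bounds I would invoke the appropriate statements of Section~8 of~\cite{gitik}: namely, that $\pp(\bar{\kappa}_\alpha) < \lambda$ in case (1), and $\pp_{\Gamma(\delta^+)}(\bar{\kappa}_\alpha) < \lambda$ in case (2). The heuristic reason is that the $\Gamma(\delta)$-representations near $\lambda$ at the $\kappa_\beta$ are, by Gitik's forcing design, $\delta$-complete and no more, and the cofinality of $\bar{\kappa}_\alpha$ is either too small (case~1, where countably complete ideals on $\cf(\alpha)$-sized sets cannot amalgamate the $\delta$-complete data) or the demanded completeness is too large (case~2, where $\delta^+$-completeness exceeds what Gitik's representations actually carry) to reassemble these representations across the sequence $\langle\kappa_\beta : \beta < \alpha\rangle$ to reach $\lambda$.

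The main obstacle is genuinely the upper-bound half: it lies outside our machinery and requires extracting the precise statements from Gitik's Section~8 (around Proposition~8.11) that rule out representations of $\lambda$ of the relevant type at $\bar{\kappa}_\alpha$. The lower-bound half, by contrast, is essentially bookkeeping once the parameters of Continuity are unpacked. A minor but real technical point to verify is that the hypothesis $\theta < \kappa_\beta$ of Proposition~\ref{continuity} holds at cofinally many $\beta < \alpha$ in each case, but this is automatic because $\bar{\kappa}_\alpha$ is a proper limit of strong cardinals and the parameters $\delta$ and $\cf(\alpha)$ are fixed below $\eta$.
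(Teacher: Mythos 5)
Your proposal is correct and follows essentially the same route as the paper: the lower bounds come from $\pp_{\Gamma(\delta)}(\kappa_\beta)\geq\lambda$ at each $\kappa_\beta$, downgraded by monotonicity to the relevant $\Gamma(\theta,\sigma)$ and transferred to $\bar{\kappa}_\alpha$ by Continuity, while the upper bounds are imported from Gitik's Section~8 (the paper cites his Proposition~8.6 for $\pp(\bar{\kappa}_\alpha)=\kappa_\alpha$ in case (1) and Proposition~8.11 for $\pp_{\Gamma(\delta^+)}(\bar{\kappa}_\alpha)\leq\kappa_\alpha$ in case (2), with $\kappa_\alpha<\lambda$ closing the gap). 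Your treatment of the lower bound via the interval of representable cardinals is, if anything, slightly more careful than the paper's.
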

\begin{proof}

For (1), suppose $\alpha<\eta$ is a limit ordinal of cofinality less than~$\delta$.  We know that $\{\kappa_\beta:\beta<\alpha\}$
is cofinal in $\bar{\kappa}_{\alpha}$, and each $\kappa_\beta$ is singular of cofinality $\delta$ with
\begin{equation}
\pp_{\Gamma(\delta)}(\kappa_\beta)\geq\lambda,
\end{equation}
hence
\begin{equation}
\pp_{\Gamma(\delta^+,\cf(\alpha))}(\kappa_\beta)\geq\lambda.
\end{equation}
An application of Continuity tells us
\begin{equation}
\pp_{\Gamma(\delta^+,\cf(\alpha))}(\bar{\kappa}_\alpha)\geq\lambda
\end{equation}
as well.  On the other hand, Gitik's Proposition~8.6 tells us
\begin{equation}
\pp(\bar{\kappa}_\alpha)=\kappa_\alpha,
\end{equation}
and so
\begin{equation}
\pp(\bar{\kappa}_\alpha)=\kappa_\alpha<\lambda\leq\pp_{\Gamma(\delta^+,\cf(\alpha))}(\bar{\kappa}_\alpha)
\end{equation}
as required.

For (2), suppose $\alpha<\eta$ is a limit ordinal of cofinality greater than~$\delta$. Again, the set
$\{\kappa_\beta:\beta<\alpha\}$ is unbounded in $\bar{\kappa}_\alpha$, and so an application of continuity tells us
\begin{equation}
\pp_{\Gamma(\delta)}(\bar{\kappa}_\alpha)\geq\lambda.
\end{equation}
On the other hand, Gitik's Proposition~8.11 tells us
\begin{equation}
\pp_{\Gamma(\delta^+)}(\bar{\kappa}_\alpha)\leq\kappa_\alpha,
\end{equation}
and~(\ref{6.4}) follows immediately.
\end{proof}

\subsection{Conclusions} Let us now return to the equation~(\ref{6.1}), and work with Gitik's extension $V[G]$ in the situation where $\delta$ is at least
$\omega_2$ and $\eta$ is greater than $\delta^+$ (these restrictions are simply to make sure we have enough room to manipulate
parameters of interest to us).

One the one hand, if we define $\mu = \bar{\kappa}_{\omega_1}$, $\theta=\delta^+$, and $\sigma = \aleph_0$, then by
equation~(\ref{6.3}), we have
\begin{equation}
\pp_{\Gamma(\sigma)}(\mu)\leq \pp(\mu) =\pp(\bar{\kappa}_\alpha)<\pp_{\Gamma(\delta^+,\cf(\alpha))}(\bar{\kappa}_\alpha) = \pp_{\Gamma(\theta,\sigma^+)}(\mu).
\end{equation}
and so for this choice of parameters we have
\begin{equation}
\label{6.14}
\pp_{\Gamma(\theta,\sigma)}(\mu)=\pp_{\Gamma(\theta,\sigma^+)}(\mu)>\pp_{\Gamma(\sigma)}(\mu).
\end{equation}

On the other hand, if we let $\mu = \bar{\kappa}_{\delta^+}$, $\theta=\delta^+$, and $\sigma = \delta$, then from
equation~(\ref{6.4}) we conclude
\begin{equation}
\pp_{\Gamma(\sigma^+)}(\mu)<\pp_{\Gamma(\sigma)}(\mu),
\end{equation}
and so (remembering that $\mu$ is of cofinality~$\theta$) we have
\begin{equation}
\label{6.16}
\pp_{\Gamma(\theta,\sigma)}(\mu) =
\pp_{\Gamma(\sigma)}(\mu)>\pp_{\Gamma(\theta,\sigma^+)}(\mu).
\end{equation}

Taken together, (\ref{6.14}) and (\ref{6.16}) show us that both summands in equation (\ref{6.1}) are important if we want a
theorem that holds in {\sf ZFC}.  We have not pushed the analysis of the cardinal arithmetic in Gitik's model beyond what was
presented above; it may be that there are similar examples for many other values of $\theta$ and $\sigma$.

\section{On covering numbers}

\subsection{Back to the beginning} We now look at what Theorem~\ref{theorem8} and its relatives tell us about covering numbers at singular cardinals.  Once again, we assume that $\sigma$ and $\theta$ are infinite regular cardinals, while $\mu$ is a singular cardinal satisfying
\begin{equation}
\sigma\leq\cf(\mu)<\theta<\mu.
\end{equation}

Recall that the covering number $\cov(\mu,\mu,\theta,\sigma)$ is defined to be the minimum cardinality of a subset $\mathcal{P}$ of $[\mu]^{<\mu}$ that $\sigma$-covers $[\mu]^{<\theta}$, that is, such that for every $X\in [\mu]^{<\theta}$ there is a subset $\mathcal{Y}$ of $\mathcal{P}$ of cardinality less than $\sigma$ with
\begin{equation}
X\subseteq\bigcup\mathcal{Y}.
\end{equation}
Recall as well that the {\sf cov vs. pp Theorem} of Shelah (Theorem~\ref{covppthm} mentioned in our introduction) tells us that if $\sigma$ is uncountable, then
\begin{equation}
\cov(\mu,\mu,\theta,\sigma)=\pp_{\Gamma(\theta,\sigma)}(\mu).
\end{equation}

Given this, the following theorem is a straightforward  translation of Theorem~\ref{theorem8} into the language of covering numbers.

\begin{theorem}
\label{theorem9}
Suppose $\mu$ is a singular cardinal, and let $\sigma$ and $\theta$ be regular cardinals such that
\begin{equation}
\aleph_0<\sigma<\cf(\mu)<\theta.
\end{equation}
Then
\begin{equation}
\label{odd}
\cov(\mu,\mu,\theta,\sigma)=\cov(\mu,\mu,(\cf\mu)^+,\sigma)+\cov(\mu,\mu,\theta,\sigma^+).
\end{equation}
\end{theorem}
\begin{proof}
 By (\ref{5.2}), we know
\begin{equation}
\pp_{\Gamma(\theta,\sigma)}(\mu)=\pp_{\Gamma(\sigma)}(\mu)+\pp_{\Gamma(\theta,\sigma^+)}(\mu).
\end{equation}
Because $\sigma$ is uncountable, we invoke Theorem~\ref{covppthm} and convert the pseudopowers into covering numbers, yielding
\begin{equation}
\label{zfc}
\cov(\mu,\mu,\theta,\sigma)=\cov(\mu,\mu,(\cf\mu)^+,\sigma)+\cov(\mu,\mu,\theta,\sigma^+)
\end{equation}
as required.
\end{proof}

Similarly, based on Corollary~\ref{corcor}, we have the following:

\begin{corollary}
\label{cor7.1} Suppose $\mu$ is a singular cardinal, and $\sigma<\theta$ are regular cardinals such that
\begin{equation}
\aleph_0<\sigma\leq\cf(\mu)<\min(\sigma^{+\omega},\theta)\leq\theta<\mu,
\end{equation}
Then
\begin{equation}
\cov(\mu,\mu,\theta,\sigma)=\cov(\mu,\mu,(\cf\mu)^+,\sigma)+\cov(\mu,\mu,\theta,\cf(\mu)).
\end{equation}
\end{corollary}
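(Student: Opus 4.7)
The plan is to read off Corollary~\ref{cor7.1} from Corollary~\ref{corcor} in exactly the same way that Theorem~\ref{theorem9} is read off from equation~(\ref{5.2}) of Corollary~\ref{corcor}. The only new ingredient is that the assumption $\cf(\mu)<\aleph_\omega$ lets us take the iteration count in Corollary~\ref{corcor} all the way up to $\cf(\mu)$ itself, rather than stopping at $\sigma^+$.

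First I would dispose of the trivial case $\sigma=\cf(\mu)$: both sides of the claimed equation reduce to $\cov(\mu,\mu,\theta,\cf(\mu))$ using monotonicity of covering numbers in their fourth argument. So assume $\sigma<\cf(\mu)$. Since both $\sigma$ and $\cf(\mu)$ are regular cardinals in the interval $(\aleph_0,\aleph_\omega)$, we may write $\sigma=\aleph_j$ and $\cf(\mu)=\aleph_k$ with $1\leq j<k<\omega$, and then set $n:=k-j\geq 1$, so that $\sigma^{+n}=\cf(\mu)$.

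With this $n$, the hypotheses of Corollary~\ref{corcor} are satisfied: $\sigma^{+n}=\cf(\mu)\leq\cf(\mu)<\theta<\mu$. Applying the first pseudopower equation in that corollary yields
\begin{equation*}
\pp_{\Gamma(\theta,\sigma)}(\mu)=\pp_{\Gamma(\sigma)}(\mu)+\pp_{\Gamma(\theta,\sigma^{+n})}(\mu)=\pp_{\Gamma(\sigma)}(\mu)+\pp_{\Gamma(\theta,\cf(\mu))}(\mu).
\end{equation*}
Now I would translate each of the three pseudopowers to a covering number using the {\sf cov vs.\ pp Theorem} (Theorem~\ref{covppthm}). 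The hypothesis that $\sigma$ is uncountable gives $\pp_{\Gamma(\theta,\sigma)}(\mu)=\cov(\mu,\mu,\theta,\sigma)$; the inequality $\cf(\mu)>\sigma>\aleph_0$ gives $\pp_{\Gamma(\theta,\cf(\mu))}(\mu)=\cov(\mu,\mu,\theta,\cf(\mu))$; and by the definition $\pp_{\Gamma(\sigma)}(\mu)=\pp_{\Gamma((\cf\mu)^+,\sigma)}(\mu)$ together with the same theorem, $\pp_{\Gamma(\sigma)}(\mu)=\cov(\mu,\mu,(\cf\mu)^+,\sigma)$. Assembling these three equalities produces the desired identity.

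There is no real obstacle here: the entire content is packaged into Corollary~\ref{corcor}, and the assumption $\cf(\mu)<\aleph_\omega$ is used only to guarantee that $\cf(\mu)$ is a finite successor of $\sigma$ so that the ``$\sigma^{+n}$'' version of that corollary lands exactly on $\cf(\mu)$. The one tiny point to be careful about is that the cov vs.\ pp equality requires $\sigma>\aleph_0$ on both sides of the translation, which is why the hypothesis in the statement is written with $\aleph_0<\cf(\mu)$ rather than $\aleph_0\leq\cf(\mu)$.
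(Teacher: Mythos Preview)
Your proof is correct and matches the paper's intended argument: the paper gives no explicit proof of Corollary~\ref{cor7.1}, merely saying ``Similarly, based on Corollary~\ref{corcor}'', and you have filled in precisely those details by choosing $n$ so that $\sigma^{+n}=\cf(\mu)$ and then translating via the {\sf cov vs.\ pp Theorem}. One minor quibble: in your closing comment, the operative hypothesis for the translation is $\aleph_0<\sigma$ (from which $\aleph_0<\cf(\mu)$ follows), not the other way around, but this does not affect the argument.
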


In particular, if $\mu$ is singular of cofinality $\aleph_6$, and we set $\theta=\aleph_9$ and $\sigma=\aleph_2$, then Corollary~\ref{cor7.1} tells us
\begin{equation}
\label{bizarre}
\cov(\mu,\mu,\aleph_9,\aleph_2) = \cov(\mu,\mu,\aleph_7, \aleph_2) + \cov(\mu,\mu,\aleph_9,\aleph_6).
\end{equation}
and we finally arrive at (\ref{amusement}) from the introduction.  What important for this choice of parameters is
\begin{equation}
\sigma\leq\cf(\mu)<\sigma^{+\omega},
\end{equation}

\subsection{What does it mean?}  The above formula~(\ref{bizarre}) hides a trichotomy about the ways in which certain elementary submodels interact with subsets of~$\mu$. To see why, let $\chi$ be a sufficiently large regular cardinal, and suppose
$M$ is an elementary submodel of $H(\chi)$ containing $\mu$ and such that
\begin{equation}
|M|+1\subseteq M.
\end{equation}
If we let $\mathcal{P}$ be $M\cap [\mu]^{<\mu}$, then EXACTLY one of the following three things MUST occur:
\begin{enumerate}
\item Some $X\in [\mu]^{\aleph_6}$ cannot be covered by a union of $\aleph_1$ sets from~$\mathcal{P}$.
\sk
\item Every $X\in [\mu]^{\aleph_8}$ can be covered by a union of $\aleph_1$ sets from $\mathcal{P}$.
\sk
\item Every $X\in [\mu]^{\aleph_6}$ is covered by a union of $\aleph_1$ sets from $\mathcal{P}$, but some $Y\in [\mu]^{\aleph_8}$ cannot be covered by a union of $\aleph_5$ sets from $\mathcal{P}$.
\end{enumerate}

The above is quite easy: if we let $\kappa$ be the cardinality of $M$, then (\ref{bizarre}) tells us that exactly one of the following must be true:

\begin{enumerate}
\item[$(1)'$] $\kappa<\cov(\mu,\mu,\aleph_7, \aleph_2)$, or
\sk
\item[$(2)'$] $\cov(\mu,\mu,\aleph_9, \aleph_2)\leq\kappa$, or
\sk
\item[$(3)'$] $\cov(\mu,\mu,\aleph_7,\aleph_2)\leq\kappa<\cov(\mu,\mu,\aleph_9, \aleph_6)$
\end{enumerate}
The rest follows easily.

More generally, we can consider this view of our conclusion~(\ref{odd}). Given a model $M$ as above,  if every member of $[\mu]^{\cf(\mu)}$ can be covered by a union of {\bf\em fewer than} $\sigma$ sets from $M\cap [\mu]^{<\mu}$, and every member of $[\mu]^{<\theta}$ can be covered by a union of {\bf\em at most} $\sigma$ sets from $M\cap [\mu]^{<\mu}$, then in fact every member of $[\mu]^{<\theta}$ can be covered by fewer than $\sigma$ sets from $M\cap [\mu]^{<\mu}$.

\subsection{The role of pseudopowers}  Looking back at the discussion ending the previous section, the conclusions speaks about elementary submodels and the combinatorics of $[\mu]^{<\mu}$ without any reference at all to the pseudopowers that were used in the proofs.  The fact that we are forced to use pseudopowers and then rely on the cov vs. pp theorem means that any conclusions on covering numbers are currently limited to situations where the last argument $\sigma$ is uncountable. Thus, eliminating the reliance on pseudopowers might unlock a more general theorem:
\begin{question}
Can results like Theorem~\ref{theorem9} be proved directly without the use of pseudopowers?
\end{question}
It is possible that an argument like Shelah's original proof of the cov vs. pp theorem (pp. 87-93 of~\cite{cardarith}) may work under these circumstances.

\subsection{Gitik-Shelah revisited}

Look back at Theorem~\ref{gitikshelah}.  This tells us that for a singular cardinal $\mu$, if we fix a regular cardinal $\sigma\leq\cf(\mu)$ then as $\theta$ ranges over the interval $(\cf(\mu),\mu)\cap\reg$ there are only finitely many distinct pseudopowers $\pp_{\Gamma(\theta,\sigma)}(\mu)$ achieved.    If we instead fix $\theta\in (\cf(\mu),\mu)\cap\reg$ and let $\sigma$ range over the regular cardinals in $[\aleph_0,\cf(\mu)]$, then we achieve only finitely many distinct values for different reasons: as $\sigma$ increases, the corresponding sequence of pseudopowers is non-increasing.  Thus, it is natural to ask if there is something deeper going on.

\begin{question}
Suppose $\mu$ is a singular cardinal.
\begin{equation}
\text{ Is }\{\pp_{\Gamma(\theta,\sigma)}(\mu):\sigma=\cf(\sigma)\leq\cf(\mu)<\theta=\cf(\theta)<\mu\}\text{ finite?}
\end{equation}
\end{question}
Note that this is true for $\mu$ satisfying $\cf(\mu)<\aleph_\omega$, so the first interesting case occurs when $\mu$ is singular of cofinality $\aleph_{\omega+1}$.  For a more specific question that may shed light, suppose $\mu$ is singular of cofinality $\aleph_{\omega+1}$, and we have a cardinal $\lambda$ that is $\Gamma(\aleph_n)$-representable at $\mu$ for all $n<\omega$.  Is $\lambda$ also $\Gamma(\aleph_{\omega+1})$-representable at $\mu$?

\subsection{A conjecture}

Finally, we ask if the behavior we observed for singular cardinals with cofinality less than $\aleph_\omega$ holds in general.

\begin{question}
Suppose $\mu$ is singular, and $\sigma<\theta$ are regular cardinals with $\sigma\leq\cf(\mu)<\theta<\mu$.
\begin{equation}
\text{ Is }\PP_{\Gamma(\theta,\sigma)}(\mu) = \PP_{\Gamma((\cf\mu)^+,\sigma)}(\mu)\cup\PP_{\Gamma(\theta,\cf(\mu))}(\mu)\text{?}
\end{equation}
\end{question}

Note that a positive answer to this question also implies a positive answer to Question 2, as both
\begin{equation}
\left\{\pp_{\Gamma((\cf(\mu))^+,\sigma)}:\sigma\in [\aleph_0,\cf(\mu)]\cap\reg\right\}
\end{equation}
and
\begin{equation}
\left\{\pp_{\Gamma(\theta,\cf(\mu))}(\mu):\theta\in(\cf(\mu),\mu)\cap\reg\right\}
\end{equation}
are finite.  We conjecture that the answer to Question~3 is positive.

\bibliographystyle{plain}

\end{document}